\theoremstyle{thmstyletwo}%
\newtheorem{theorem}{Theorem}%  meant for continuous numbers
\newtheorem{lemma}{Lemma}
\newtheorem{remark}{Remark}%
\numberwithin{equation}{section}
\newcommand{\R}{\mathbb{R}}
\begin{document}

\appnotes{Paper}

\firstpage{1}

\title[Quasi-interpolation for the Helmholtz--Hodge decomposition]{Quasi-interpolation for the Helmholtz--Hodge decomposition}
\author{Nicholas Fisher
\address{\orgdiv{Department of Mathematics and Statistics}, \orgname{Portland State University}, \orgaddress{\state{Portland, OR} \postcode{97201}, \country{USA}}}}
\authormark{N. Fisher et al.}
\author{Gregory Fasshauer
\address{\orgdiv{Department of Applied Mathematics and Statistics}, \orgname{Colorado School of Mines}, \orgaddress{\state{Golden, CO} \postcode{80401}, \country{USA}}}}

\corresp[*]{Corresponding author: \href{email:wenwugao528@163.com}{wenwugao528@163.com}}
\author{Wenwu Gao*
\address{\orgdiv{School of Big Data and Statistics}, \orgname{Anhui University}, \orgaddress{\state{Hefei}, \country{PR China}}}}

%\received{Date}{0}{Year}
%\revised{Date}{0}{Year}
%\accepted{Date}{0}{Year}

%\editor{Associate Editor: Name}

\abstract{The paper aims at proposing  an efficient and stable quasi-interpolation based method for  numerically computing the Helmholtz--Hodge decomposition of a   vector field.  To this end, we first explicitly construct a    matrix kernel in a general form from polyharmonic splines such that it includes divergence-free/curl-free/harmonic matrix kernels as special cases. Then we  apply the matrix kernel to vector decomposition via the convolution technique together with the  Helmholtz--Hodge decomposition.  More precisely, we show that if we  convolve a vector field with a scaled divergence-free (curl-free)  matrix kernel, then the resulting divergence-free (curl-free) convolution sequence converges to the corresponding  divergence-free (curl-free)  part of the   Helmholtz--Hodge decomposition of the field. Finally, by discretizing the convolution sequence via certain quadrature rule, we construct a  family of (divergence-free/curl-free) quasi-interpolants  for  the Helmholtz--Hodge decomposition (defined both in the whole space and over a bounded domain). Corresponding error estimates derived in the paper show that our quasi-interpolation based method yields convergent approximants to both the vector field and its Helmholtz--Hodge decomposition.}
\keywords{Divergence/curl-free/harmonic quasi-interpolation; Vector-valued function approximation; Helmholtz--Hodge decomposition; Leray projection; Polyharmonic splines.}

% \boxedtext{
% \begin{itemize}
% \item Key boxed text here.
% \item Key boxed text here.
% \item Key boxed text here.
% \end{itemize}}

\maketitle

\section{Introduction}
The Helmholtz--Hodge decomposition has broad applications in applied mathematics, most notably in the study of computational fluid dynamics \cite{chorin, fsw, temam} and image processing \cite{pp}. In short, the Helmholtz--Hodge decomposition of a vector field $\bold f$ goes as $\bold f = \bold w + \nabla p$, where $\bold w$ is divergence-free ($\nabla \cdot \bold w = 0$) and $\nabla p$ is curl-free ($\nabla \times (\nabla p) = \bold 0$), though more general versions can be developed \cite{Schwarz}. Understandably, numerical methods for computing vector decompositions may vary based on setting and application. For example, in the context of computational fluid dynamics, one typically first solves a Poisson equation for the pressure term and then uses this to project the velocity term onto the space of divergence-free functions in a separate step \cite{gms}. On periodic domains, methods utilizing wavelets \cite{Deriaz} and meshless kernel methods \cite{Freeden}, \cite{fw2} have been put to use, while on bounded domains, radial basis function methods have been studied \cite{fw}, \cite{Wendland3}, \cite{Wendland5}, \cite{ZMRS0}. Many of the aforementioned techniques are based on divergence-free/curl-free interpolation methods, a problem which has been well studied  \cite{DoduandRabut}, \cite{Fuselier, Fuselier1}, \cite{Wendland1}, \cite{Wendland3},  \cite{VenelandBeatson},  \cite{Wendland5}. A common issue of these  interpolation based methods is numerical instability \cite{Drake}. To circumvent this issue, we adopt quasi-interpolation instead of interpolation for numerically computing vector decompositions motivated by fair properties and wide applications of quasi-interpolation  \cite{BeatsonPowell}, \cite{Buhmann, Buhmann1, BuhmannandDyn}, \cite{CharlesChuiandDiamond, CharlesChuiandDiamond1, CharlesChuiandLai}, \cite{GaoandWu4, Gaoetal}, \cite{Gaoandsun}, \cite{Lanzaraandmazya, Lanzaraandmazya1, Lanzaraandmazya2}, \cite{Rabut}, \cite{Schabackandwu}, \cite{ZMRS, GeneralizedWu}. 

The application of quasi-interpolants to vector-valued (or manifold-valued) data is well understood \cite{Amodei}, \cite{Atteia}, \cite{Benbourhim}, \cite{ChenandSuter}, \cite{Grohs, Grohs1}, but it was only until recently that divergence-free quasi-interpolants were introduced \cite{Gaoetal2}. The key idea is to first construct a scaled divergence-free matrix kernel such that it behaves like a delta distribution when it is convolved with a  divergence-free function and then discretize  the convolution sequence to get the final quasi-interpolant. However, since it is assumed that the data  must be sampled from a divergence-free function,  the resulting divergence-free quasi-interpolant can \textbf{not} be directly applied to reconstruct a general vector field $\bold f$. To overcome this pitfall, the paper  constructs a general quasi-interpolation scheme such that it can be used as an alternative for numerically computing the Helmholtz--Hodge decomposition of $\bold f$ and consequently reconstructing $\bold f$.

 We first construct a generalized matrix kernel from polyharmonic splines in an explicit form  (cf.~Equation \eqref{divergencefreekernl}) 
$$  {}_{\eta,\beta,\gamma}\bm{\Psi}_{\ell,k}=(-1)^{\ell}q_{d,\ell,k}(\widetilde{\Delta})[\eta\Delta {\bf I}_d - \beta\nabla \nabla^T](\nabla \nabla^T)^{\gamma}\phi_{\ell+\gamma+\max\{\eta,\beta\}}, \  \eta, \beta, \gamma \in\{0,1\}, \eta+ \beta+ \gamma\neq 0.$$
 Here $\ell$, $k$ are some positive integers satisfying $k\leq \ell$, $q_{d,\ell,k}$ is a polynomial of order $\ell+k$ in $d$ variables, $\widetilde{\Delta}$ is a difference operator, $\bold I_d$ is a $d\times d$ identity matrix, and $\phi_{\ell+m}$, $m = 1,2$, is a fundamental solution of the iterated Laplace operator $\Delta^{\ell+m}$ in $d$ variables in a distributional sense. The matrix kernel includes divergence-free, curl-free, and harmonic matrix kernels as special examples by choosing appropriate values of parameters $\eta, \beta, \gamma$.
 We also study  properties of ${}_{\eta,\beta,\gamma}\bm{\Psi}_{\ell,k}$ in both the spatial domain (see Lemma \ref{lem:convolutionerror}) and the Fourier domain (see Equations \eqref{fouriertrans}--\eqref{fouriertrans2}). 
 
 We then show that  the convolution of   $\bold f$ with a scaled version   of  the matrix kernel  approximates the divergence-free (curl-free) part of $\bold f$. More precisely, let $\bold f$ be a vector field having the Helmholtz--Hodge decomposition  $\bold f=\bold w+\nabla p$, where $\bold w$ and $\nabla p$ are   divergence-free part and curl-free  part of $\bold f$, respectively,   let 
 $${}_{\eta,\beta,\gamma}\bm{\Psi}_{\ell,k}^h(\bold x):=h^{-d}{}_{\eta,\beta,\gamma}\bm{\Psi}_{\ell,k}({\bold x}/h)$$ and define $${}_{\eta,\beta,\gamma}\bm{\Psi}_{\ell,k}^h*\bold f(\bold x):=\int_{\R^d}{}_{\eta,\beta,\gamma}\bm{\Psi}_{\ell,k}^h(\bold x-\bold t)\bold f(\bold t)d\bold t,$$ we can show that  the above convolution sequence converges  to  $\bold w$ (or $\nabla p$) by  appropriately choosing parameters $\eta,\beta,\gamma$ such that ${}_{\eta,\beta,\gamma}\bm{\Psi}_{\ell,k}$ is divergence-free (or curl-free).
  Thus the scaled matrix kernel  ${}_{\eta,\beta,\gamma}\bm{\Psi}_{\ell,k}^h$ provides an alternative for   vector decomposition. 
 
 However, in practical applications,  we only have   discrete vector values, i.e., $$\{\mathbf{f}(\bold j h)=(f_1(\bold j h), f_2(\bold j h),\dots, f_d(\bold j h))^T\}_{\bold j\in \mathbb{Z}^d}$$  sampled from a vector field $\bold f=(f_1,f_2,\dots,f_d)^T$, $f_s:\R^d\rightarrow \R$, at the equidistant centers $\{\bold jh\}_{\bold j\in \mathbb{Z}^d}$ over the whole space $\R^d$. In such cases, we have to  discretize the above convolution sequence ${}_{\eta,\beta,\gamma}\bm{\Psi}_{\ell,k}^h*\bold f(\bold x)$ via certain quadrature rule. In this paper,  following the   Schoenberg model \cite{Schoenberg}, we take   the rectangular rule as an example to get an ansatz:
  $$Q_{h}\mathbf{f}(\bold x)=\sum_{\bold j\in \mathbb{Z}^d} {}_{\eta,\beta,\gamma}\bm{\Psi}_{\ell,k}(\bold x/h-\bold j)\mathbf{f}(\bold jh), \ \bold x\in \R^d.$$
   This scheme includes divergence-free quasi-interpolation \cite{Gaoetal2},  curl-free quasi-interpolation, and even harmonic quasi-interpolation    by choosing different sets of $\eta, \beta, \gamma$.  Moreover, we   show that the above family of quasi-interpolants can be used for numerically computing the Helmholtz--Hodge decomposition of a vector field. 
To be specific, if we apply the above divergence-free/curl-free quasi-interpolation $Q_{h}\mathbf{f}$ directly to a  vector field $\bold f$ having the  Helmholtz--Hodge decomposition $\bold f=\bold w+\nabla p$, then it automatically approximates the corresponding divergence-free share  ($\bold w$) and curl-free share ($\nabla p$), respectively. 
 
 Finally, we extend our idea to  a vector field defined over a bounded domain $\Omega\subset \R^d$ by coupling interpolation near the boundary together with quasi-interpolation in the interior of $\Omega$, a technique proposed in \cite{Gaoetal2} for circumventing the boundary problem.
 
 The main contributions of the paper are two-fold.  First, we construct a family of structure preserving  (divergence-free/curl-free/harmonic)  quasi-interpolants for vector-valued function approximation. Second, we provide a computationally efficient and numerically stable technique for   the Helmholtz--Hodge decomposition.

The  paper is organized as follows.
Preliminaries are  provided in Section $2$ to make the paper self contained. Section $3$ focuses on constructing a quasi-interpolation based method for  the Helmholtz--Hodge decomposition of a vector field defined both over the whole space $\R^d$ and a bounded domain $\Omega$. We first constructs a matrix kernel that includes divergence-free, curl-free, and harmonic  matrix kernels as special examples.  Corresponding properties of the matrix kernel in the spatial domain and the Fourier domain are also derived. Then we apply the matrix kernel to vector decomposition. Finally, based on this matrix kernel, we construct quasi-interpolation and  derives its error estimates for the vector-valued function, its derivatives, and the vector decomposition. Section $4$  gives numerical simulations to validate theoretical convergence analysis and structure preserving properties  of our quasi-interpolation. Conclusions and discussions are presented in Section $5$.

%%%%%%%%%%%%%%%%%%%%%%%%%%%%%%%%%%%%%%%%%%%%%%%%%%%%%%%%%%%%%%%%%%%%%%%%%%%%%%%%%%%%%%%%
                                 %Preliminaries
%%%%%%%%%%%%%%%%%%%%%%%%%%%%%%%%%%%%%%%%%%%%%%%%%%%%%%%%%%%%%%%%%%%%%%%%%%%%%%%%%%%%%%%%

\section{Preliminaries}\label{sec2}
We begin this section with providing some notations.
 \subsection{Notations}
Let $D$ be   a differential operator defined bv
$$D^{\alpha}=\frac{\partial^{|\alpha|}}{\partial \bold x^{\alpha}}=\frac{\partial^{|\alpha|}}{\partial x_1^{\alpha_1}\partial x_2^{\alpha_2}\dots \partial x_d^{\alpha_d}}:=\prod_{s=1}^dD_{x_s}^{\alpha_s},$$ where $D_{x_s}=\frac{\partial}{\partial x_s}$ and $\alpha=(\alpha_1,\alpha_2,\dots,\alpha_d)$ is a multi-index whose length $|\alpha|=\sum_{s=1}^d\alpha_s$.
Then the  Laplace operator $\Delta$ in $d$ variables goes as $$\Delta f(\bold x)=\sum_{j=1}^d\frac{\partial^2f(\bold x)}{\partial x_j^2}, \ \bold x=(x_1,x_2,\cdots,x_d)\in \R^d.$$
 Let $\ell$ be a positive integer and $\phi_{\ell}$ be a fundamental solution of the $\ell$-th iterated Laplace operator $\Delta^{\ell}$, namely, the equation $\Delta^{\ell}\phi_{\ell}=\delta$ holds true for any Schwartz class distribution \cite{Stein}.  We note that $\phi_{\ell}$ is unique up to polyharmonic polynomials $P_{\ell}$ in the null space of $\Delta^{\ell}$ (that is, $\Delta^{\ell}P_{\ell}=0$).  In particular,  a commonly used $\phi_{\ell}$ (which is also known as the Green's function of $\Delta^{\ell}$)  takes the form \cite{Rabut}
\begin{equation}\label{fundamentalsolution}
\phi_{\ell}(\bold x)=\|\bold x\|^{2\ell-d}[C_{\ell,d}\ln\|\bold x\|+D_{\ell,d}],
\end{equation}
where $C_{\ell,d}=0$, $D_{\ell,d}=E_{\ell,d}$ if $d$ is odd, and $C_{\ell,d}=E_{\ell,d}$, $D_{\ell,d}=0$ if $d$ is even. Here
$$E_{\ell,d}=\frac{\Gamma(d/2)}{2^{\ell}\pi^{d/2}(\ell-1)!\prod_{j=0,j\neq \ell-d/2}^{\ell-1}(2\ell-2j-d)}.$$ 
Denote by $\bold e_s$  the unit vector in the $s^{\text{th}}$ coordinate direction of $\R^d$ and let $\widetilde{\Delta}$ be a multivariate central difference  operator  whose $s^{\text{th}}$-direction component is 
$$(\widetilde{\Delta})_sf=f(\cdot-\bold e_s)-2f+f(\cdot+\bold e_s).$$
 It is straightforward to construct a polyharmonic spline $\widetilde{\Delta}^{\ell}\phi_{\ell}$ just by taking $\widetilde{\Delta}^{\ell}$ directly to the thin-plate spline $\phi_{\ell}$. Furthermore, with  $\widetilde{\Delta}^{\ell}\phi_{\ell}$ being a kernel, we can construct a quasi-interpolant in the Schoenberg form \cite{Schoenberg} $$\sum_{{\bold j}\in \mathbb{Z}^d}f({\bold j}h)\widetilde{\Delta}^{\ell}\phi_{\ell}({\bold x}/h-{\bold j}).$$   However, since the kernel $\widetilde{\Delta}^{\ell}\phi_{\ell}$ only satisfies the (incomplete) Strang--Fix condition of order two \cite{Schabackandwu}, \cite{Strang}, the above quasi-interpolant  provides a low approximation order (two) based on the celebrated results given by Jia and Lei \cite{JiaandLei}.  To overcome such a pitfall,  Rabut \cite{Rabut0, Rabut, Rabut1} provided a thorough discussion on how to construct  polyharmonic splines  with higher-order Strang--Fix conditions such that the resulting quasi-interpolant  in the Schoenberg form provides higher approximation order (that is even up to the highest order $2\ell$).
\subsection{\textit{Polyharmonic splines satisfying higher-order Strang--Fix conditions}}
 Let $k$  be a positive integer and $a_i=(-1)^i2(i!)^2/(2i+2)!$, for $i=0,1,\cdots, k-1$. Define $p_{d,\ell,k}$ to be a polynomial of degree $k\ell$ on $\R^d$ in the form
$$p_{d,\ell,k}(\bold x)=\Bigg(\sum_{i=0}^{k-1}a_i\Bigg(\sum_{s=1}^dx_s^{i+1}\Bigg)\Bigg)^{\ell}, \ \bold x=(x_1,x_2,\cdots,x_d)\in \R^d.$$  Denote by $q_{d,\ell,k}$  a truncation of $p_{d,\ell,k}$ to order  $\ell+k$.  According to Rabut \cite{Rabut0, Rabut, Rabut1}, a level-$k\ell$ polyharmonic spline satisfying the  Strang--Fix condition of order  $2k$ takes the form
\begin{equation}\label{polyharmonicspline}
\psi_{\ell,k}(\bold x)=(-1)^{\ell}q_{d,\ell,k}(\widetilde{\Delta})\phi_{\ell}(\bold x), \quad  \bold x\in \R^d.
\end{equation} 
 Obviously, $\psi_{\ell,k}$  includes  $\widetilde{\Delta}^{\ell}\phi_{\ell}$ as a special case of $k=1$.  

We go further with studying some properties of $\psi_{\ell,k}$ both in the spatial and Fourier domains.  

Define the (generalized) Fourier transform  $\hat{\phi}$ of a function $\phi$ via \cite{Stein}
$$\mathfrak{F}(\phi)(\bm{\omega}):=\hat{\phi}(\bm{\omega})=(2\pi)^{-d/2}\int_{\mathbb{R}^d}\phi(\bold x)e^{-i\bm{\omega}\cdot \bold x} d\bold x,$$
where $\bm{\omega}\cdot \bold x=\sum_{s=1}^d \omega_s x_s$. 
Then, based on the properties of the (generalized) Fourier transform, we can derive
$\hat{\phi}_{\ell}(\bm{\omega})=\|\bm{\omega}\|^{-2\ell}$, which in turn leads to
\begin{equation*}
      \widehat{\psi_{\ell,k}}(\bm{\omega})=\frac{q_{d,\ell,k}(-4\sin^2(\bm{\omega}/2))}{\|\bm{\omega}\|^{2\ell}}.
      \end{equation*}
Consequently, we have the following lemma \cite{Rabut0}.
\begin{lemma}\label{polyharmonicsplinepropert}
Let $\psi_{\ell,k}$ be a level-$k\ell$ polyharmonic spline defined in Equation  \eqref{polyharmonicspline}.  Then the decay condition
\begin{equation}\label{decayofderivatives}D^{\alpha}\psi_{\ell,k}(\bold x)=\mathcal{O}(\|\bold x\|^{-d-2k+|\alpha|})
\end{equation}
holds true for any $0\leq |\alpha|< 2k$  as  $\|\bold x\|$ tends to infinity. Moreover, the (generalized) Fourier transform $ \widehat{\psi_{\ell,k}}$ of $\psi_{\ell,k}$ satisfies  $\widehat{\psi_{\ell,k}}\in \mathcal{M}^{2k}(\R^d)$ together with the Strang--Fix conditions of order $2k$ \cite{JiaandLei}, \cite{Strang}: \begin{equation}\label{Strangfix}
\begin{cases}
\widehat{\psi_{\ell,k}}\in C^{2k}(\R^d),\\
\widehat{\psi_{\ell,k}}^{(\alpha)}(\bold 0)=\delta_{0,\alpha},\ 0\leq |\alpha|< 2k,\\
\widehat{\psi_{\ell,k}}^{(\alpha)}(2\pi \bold j)=0, \  0\leq |\alpha|< 2k, \bold j\in \mathbb{Z}^d.
\end{cases}
\end{equation}
Here the function space $ \mathcal{M}^{2k}(\R^d)$ is defined as \begin{equation}\label{eq:KernelSumProp}
    \mathcal{M}^{2k}(\R^d):=\Bigg\{\hat{\psi}\in C^{2k}(\mathbb{R}^d):\max_{|\xi|\leq 1,|\eta|\leq 1}\sum_{\bold j\neq \bold 0}|2\pi \bold j+\xi|^{|\beta|}|\hat{\psi}^{(\alpha)}(2\pi \bold j +\eta)|<\infty, ~0\leq |\alpha|\leq 2k, ~0\leq|\beta|\leq 2k-1\Bigg\}.
\end{equation}
\end{lemma}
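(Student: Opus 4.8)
The plan is to transport everything to the Fourier side, where $\psi_{\ell,k}$ becomes an explicit elementary function, and then to read off each assertion from the structure of that function near and away from the lattice $2\pi\mathbb{Z}^d$. I would start from the Fourier transform recorded above, $\widehat{\psi_{\ell,k}}(\bm{\omega})=q_{d,\ell,k}(-4\sin^2(\bm{\omega}/2))\|\bm{\omega}\|^{-2\ell}$, which comes from $\widehat{\phi_{\ell}}(\bm{\omega})=\|\bm{\omega}\|^{-2\ell}$ together with the fact that the difference $(\widetilde{\Delta})_s$ has Fourier symbol $e^{-i\omega_s}-2+e^{i\omega_s}=-4\sin^2(\omega_s/2)$. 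Two elementary observations about $q_{d,\ell,k}$ will be used throughout: since $a_0=1$, $q_{d,\ell,k}$ has no constant term, its lowest-degree homogeneous part is exactly $(x_1+\dots+x_d)^{\ell}$, and every monomial occurring in it has degree at least $\ell$; and the numerator $q_{d,\ell,k}(-4\sin^2(\bm{\omega}/2))$ is a bounded $2\pi$-periodic trigonometric polynomial, so that $\widehat{\psi_{\ell,k}}$ is real-analytic off $2\pi\mathbb{Z}^d$ and $|\widehat{\psi_{\ell,k}}^{(\alpha)}(\bm{\omega})|=\mathcal{O}(\|\bm{\omega}\|^{-2\ell})$ as $\|\bm{\omega}\|\to\infty$, uniformly in $|\alpha|\le 2k$.

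The heart of the matter is the behaviour at the origin, and this is exactly where the constants $a_i$ enter. The key point is that the $a_i$ invert $t\mapsto 2\cos t-2$, that is, $\sum_{i\ge 0}a_i\bigl(-4\sin^2(t/2)\bigr)^{i+1}=-t^2$ (equivalently, the classical Maclaurin expansion of $\arcsin^2$); truncating this series at $i=k-1$ therefore gives $\sum_{i=0}^{k-1}a_i(-4\sin^2(\omega_s/2))^{i+1}=-\omega_s^2+\mathcal{O}(|\omega_s|^{2k+2})$. Summing over the coordinates and raising the result to the $\ell$-th power yields $p_{d,\ell,k}(-4\sin^2(\bm{\omega}/2))=\|\bm{\omega}\|^{2\ell}+\mathcal{O}(\|\bm{\omega}\|^{2\ell+2k})$ near $\bold 0$ (the choice of signs being fixed in \eqref{polyharmonicspline} and \cite{Rabut0}); passing to the order-$(\ell+k)$ truncation $q_{d,\ell,k}$ changes this only by $\mathcal{O}(\|\bm{\omega}\|^{2\ell+2k+2})$, since the discarded monomials, of degree $\ge\ell+k+1$, map to that order under the substitution, so the same expansion holds for $q_{d,\ell,k}$, and hence $\widehat{\psi_{\ell,k}}(\bm{\omega})=1+\mathcal{O}(\|\bm{\omega}\|^{2k})$ near $\bold 0$. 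This is precisely the conditions $\widehat{\psi_{\ell,k}}^{(\alpha)}(\bold 0)=\delta_{0,\alpha}$ for $|\alpha|<2k$, and a closer inspection of the quotient (a real-analytic remainder vanishing to order $2\ell+2k$ over the polynomial $\|\bm{\omega}\|^{2\ell}$) yields the required $C^{2k}$-smoothness near $\bold 0$.

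For the remaining lattice points I would exploit that $\sin^2(\omega_s/2)$ has a double zero at every $\omega_s\in 2\pi\mathbb{Z}$: since each monomial $\bold x^{\mu}$ of $q_{d,\ell,k}$ has $|\mu|\ge\ell$, the factor $\prod_{s}(-4\sin^2(\omega_s/2))^{\mu_s}$ vanishes to order $2|\mu|\ge 2\ell$ at each $2\pi\bold j$ with $\bold j\neq\bold 0$, whereas $\|\bm{\omega}\|^{-2\ell}$ is $C^{\infty}$ and non-zero there; hence $\widehat{\psi_{\ell,k}}$ vanishes to order at least $2\ell\ge 2k$ at such points, which is the third Strang--Fix condition, and combined with the previous paragraph and the analyticity off $2\pi\mathbb{Z}^d$ this gives $\widehat{\psi_{\ell,k}}\in C^{2k}(\R^d)$. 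The membership $\widehat{\psi_{\ell,k}}\in\mathcal{M}^{2k}(\R^d)$ then follows from the decay recorded in the first paragraph: for $|\alpha|\le 2k$ and $|\beta|\le 2k-1$ the series $\sum_{\bold j\neq\bold 0}|2\pi\bold j+\xi|^{|\beta|}|\widehat{\psi_{\ell,k}}^{(\alpha)}(2\pi\bold j+\eta)|$ is bounded, uniformly for $|\xi|,|\eta|\le 1$, by a constant times $\sum_{\bold j\neq\bold 0}|\bold j|^{|\beta|-2\ell}$, which converges in the admissible range of $\ell,k,d$ (the one place where the precise relation between these parameters is used; cf.\ \cite{Rabut0,Rabut,Rabut1}).

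Finally, the spatial decay \eqref{decayofderivatives} is obtained by transferring this Fourier information back. One can argue directly: by the choice of the $a_i$, the finite-difference operator $q_{d,\ell,k}(\widetilde{\Delta})$ annihilates a sufficiently large initial block of the Taylor polynomial of $\bold t\mapsto D^{\alpha}\phi_{\ell}(\bold x-\bold t)$ about a distant point $\bold x$, so that $D^{\alpha}\psi_{\ell,k}(\bold x)=q_{d,\ell,k}(\widetilde{\Delta})[D^{\alpha}\phi_{\ell}](\bold x)$ reduces to a Taylor remainder, which, in view of the polynomial behaviour of $D^{\alpha}\phi_{\ell}$ read off from \eqref{fundamentalsolution}, is of the size claimed in \eqref{decayofderivatives}. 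Equivalently, via Fourier inversion, the decay of $D^{\alpha}\psi_{\ell,k}$ is governed by the order of the homogeneous singularity of $\bm{\omega}^{\alpha}\widehat{\psi_{\ell,k}}(\bm{\omega})$ at $\bold 0$ — which by the second paragraph is of order $2k$ modified by $|\alpha|$ — together with the fast decay $\|\bm{\omega}\|^{-2\ell}$ of $\widehat{\psi_{\ell,k}}$ at infinity, which supplies the extra factor $\|\bold x\|^{-d}$. I expect the genuinely delicate points to be (i) pinning down the exact regularity of the quotient at the origin and (ii) the sharpness of the spatial decay rate, both of which rest on the precise combinatorial identity satisfied by the $a_i$ and on the exact truncation order $\ell+k$ — the core of Rabut's construction; the rest is bookkeeping with the Fourier transform.
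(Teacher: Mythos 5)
The paper does not actually prove this lemma: after the one\nobreakdash-line computation of $\widehat{\psi_{\ell,k}}(\bm{\omega})=q_{d,\ell,k}(-4\sin^2(\bm{\omega}/2))/\|\bm{\omega}\|^{2\ell}$ the statement is simply quoted from Rabut \cite{Rabut0}, so your proposal supplies an argument the authors delegate to the literature. At the level of ideas your reconstruction is the correct one and starts from exactly the formula the paper records. The central identity is right: with $a_i=(-1)^i2(i!)^2/(2i+2)!$ one has $\sum_{i\ge 0}a_i\bigl(-4\sin^2(t/2)\bigr)^{i+1}=-t^2$ (the $\arcsin^2$ series), whence the numerator equals $(-1)^{\ell}\|\bm{\omega}\|^{2\ell}\bigl(1+\mathcal{O}(\|\bm{\omega}\|^{2k})\bigr)$ near the origin and the second Strang--Fix line follows; the double zero of $\sin^2(\omega_s/2)$ at every $\omega_s\in 2\pi\mathbb{Z}$ makes each substituted monomial vanish to order $2|\mu|\ge 2\ell\ge 2k$ at the nonzero lattice points, giving the third line; and the uniform $\mathcal{O}(\|\bm{\omega}\|^{-2\ell})$ bound on all derivatives at infinity drives both the $\mathcal{M}^{2k}$ membership and (via the homogeneity of the singular part at $\bm{\omega}=\bold 0$) the spatial decay. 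Note in passing that your Fourier\nobreakdash-side decay argument actually yields the stronger rate $\|\bold x\|^{-d-2k-|\alpha|}$, since $\bm{\omega}^{\alpha}\widehat{\psi_{\ell,k}}$ has leading singular homogeneity $2k+|\alpha|$ at the origin; the bound \eqref{decayofderivatives} is weaker and therefore certainly follows.

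Two places remain thin, and you flag the first yourself. First, the naive homogeneity count on the remainder $R(\bm{\omega})/\|\bm{\omega}\|^{2\ell}$, with $R$ real\nobreakdash-analytic and vanishing to order $2\ell+2k$, only delivers $C^{2k-1}$ regularity with \emph{bounded} (not obviously continuous) derivatives of order $2k$ at the origin; genuine membership in $C^{2k}(\R^d)$ needs the finer structure of $R$ coming from the exact truncation order $\ell+k$, so ``a closer inspection'' is doing real work there and would have to be carried out to claim the lemma as stated. Second, your convergence criterion for the $\mathcal{M}^{2k}$ sum is $\sum_{\bold j\neq\bold 0}|\bold j|^{|\beta|-2\ell}<\infty$, i.e.\ $2\ell>d+2k-1$; this restriction on $(\ell,k,d)$ appears nowhere in the statement of the lemma and is only implicit elsewhere in the paper through the cap $2\ell-d-1$ in Lemma \ref{convolution} and Theorem \ref{errorestimateforderi}. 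Neither point is an error in your reasoning --- both are hypotheses that the cited statement suppresses and that an honest proof must make explicit --- but as written they are the two genuine gaps between your sketch and a complete argument.
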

More importantly, according to \cite[Theorem 2.1]{LeijiaandCheney},  we can derive the following simultaneous error estimates.
 \begin{lemma}\label{convolution}
 Let $\psi_{\ell, k}^h(\bold x)=h^{-d}\psi_{\ell, k}(\bold x/h)$ be a scaled version of $\psi_{\ell, k}$. Define the   convolution of $f$ with $\psi_{\ell,k}^h$ in the form $$f*\psi_{\ell,k}^h(\bold x)=\int_{\R^d}f(\bold t)\psi_{\ell, k}^h(\bold x-\bold t)d\bold t.$$  Then we have   error estimates
$$\|(D^{\alpha}f)*\psi_{\ell,k}^h-D^{\alpha}f\|_p = \mathcal{O}(h^{2k})$$
 for any $0\leq |\alpha|\leq \min\{2k, r-2k\}$ and any function $f$ in the Sobolev space 
 \begin{equation}\label{fconditions}
 \mathcal{S}_p^r(\R^d):=\Bigg\{f\in W_p^r(\R^d):\bm{\omega}^{\alpha}\hat{f}(\bm{\omega})\in L^p(\R^d), |\alpha|=r, \ r>d/2\Bigg\}. 
\end{equation} 
Moreover,  the quasi-interpolant in the Schoenberg form $$Q_{h}f(\bold x)=\sum_{{\bold j}\in \mathbb{Z}^d}f({\bold j}h)\psi_{\ell,k}({\bold x}/h-{\bold j})$$ provides simultaneous approximation orders  (to both the approximated function and its derivatives) as  \begin{equation}\label{simultaneousofpol}
\|D^{\alpha}Q_hf-D^{\alpha}f\|_{p,\R^d}=\mathcal{O}(h^{2k-|\alpha|}),\  0\leq |\alpha|<\min\{2k,r-2k,2\ell -d-1\}.
\end{equation} 
 \end{lemma}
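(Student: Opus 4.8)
The plan is to obtain both estimates as special cases of the general $L^p$-theory of approximation by semi-discrete convolution operators whose kernels satisfy higher-order Strang--Fix conditions, specialized to the kernel $\psi_{\ell,k}$. All the structural facts that this theory requires have already been recorded in Lemma~\ref{polyharmonicsplinepropert}: the Strang--Fix conditions \eqref{Strangfix} of order $2k$, the membership $\widehat{\psi_{\ell,k}}\in\mathcal{M}^{2k}(\R^d)$ defined in \eqref{eq:KernelSumProp}, and the algebraic decay \eqref{decayofderivatives} of $D^{\alpha}\psi_{\ell,k}$. Thus the actual work is to check that the hypotheses of \cite[Theorem~2.1]{LeijiaandCheney} and of the results of Jia and Lei \cite{JiaandLei} are satisfied, to invoke those results, and to keep careful track of the range of $|\alpha|$ for which each step survives.

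For the convolution estimate I would pass to the Fourier side. Using $\widehat{\psi_{\ell,k}^h}(\bm{\omega})=\widehat{\psi_{\ell,k}}(h\bm{\omega})$ and $\widehat{D^{\alpha}f}(\bm{\omega})=(i\bm{\omega})^{\alpha}\hat f(\bm{\omega})$, the convolution theorem (with the normalization fixed in Section~\ref{sec2}, under which $\psi_{\ell,k}^h$ is an approximate identity) shows that the Fourier transform of the error $(D^{\alpha}f)*\psi_{\ell,k}^h-D^{\alpha}f$ equals $(i\bm{\omega})^{\alpha}\hat f(\bm{\omega})\big(\widehat{\psi_{\ell,k}}(h\bm{\omega})-1\big)$. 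The second line of \eqref{Strangfix} says that $\widehat{\psi_{\ell,k}}-1$ vanishes to order $2k$ at the origin, hence $|\widehat{\psi_{\ell,k}}(\bm{\xi})-1|\leq C\min\{1,\|\bm{\xi}\|^{2k}\}$, so this multiplier is dominated by $Ch^{2k}\|\bm{\omega}\|^{2k+|\alpha|}$ at moderate frequencies and by $C\|\bm{\omega}\|^{|\alpha|}$ at high frequencies. Since $f\in\mathcal{S}_p^r(\R^d)$ supplies $\bm{\omega}^{\alpha}\hat f\in L^p$ at the level $|\alpha|=r$ with $r>d/2$, the Fourier-multiplier estimate of \cite[Theorem~2.1]{LeijiaandCheney} --- which for $p=2$ is just Plancherel but for general $1\le p\le\infty$ is the substantive ingredient --- turns this into $\|(D^{\alpha}f)*\psi_{\ell,k}^h-D^{\alpha}f\|_p=\mathcal{O}(h^{2k})$. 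In this step $|\alpha|\le r-2k$ is precisely what leaves $D^{\alpha}f$ with $2k$ spare derivatives so that the full rate $h^{2k}$ is realized, and $|\alpha|\le 2k$ keeps one within the decay regime \eqref{decayofderivatives} under which $\psi_{\ell,k}$ acts admissibly on this function class.

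For the Schoenberg quasi-interpolant I would split $Q_hf-f=(Q_hf-f*\psi_{\ell,k}^h)+(f*\psi_{\ell,k}^h-f)$, the second summand being the $\alpha=0$ case just treated. The first summand is the quadrature (aliasing) error incurred by replacing the integral with the rectangular rule; applying the Poisson summation formula to the semi-discrete operator $Q_h$ shows that its Fourier symbol differs from that of the exact convolution only through the translates $\widehat{\psi_{\ell,k}}(h\bm{\omega}+2\pi\bold j)$ with $\bold j\neq\bold 0$, and these vanish to order $2k$ at $\bm{\omega}=\bold 0$ exactly by the third line of \eqref{Strangfix}, while the uniform summability needed to carry the bound through in $L^p$ is nothing but the defining property \eqref{eq:KernelSumProp} of $\mathcal{M}^{2k}(\R^d)$. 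This is the content of the theorem of Jia and Lei \cite{JiaandLei}, and it gives $\|Q_hf-f\|_p=\mathcal{O}(h^{2k})$. For the derivative bounds one differentiates under the sum, $D^{\alpha}Q_hf(\bold x)=h^{-|\alpha|}\sum_{\bold j\in\mathbb{Z}^d}f(\bold jh)(D^{\alpha}\psi_{\ell,k})(\bold x/h-\bold j)$; the restriction $|\alpha|<2\ell-d-1$ guarantees that $D^{\alpha}\psi_{\ell,k}$ is a genuine continuous function (reflecting that $\phi_{\ell}$ is only $C^{2\ell-d-1}$-smooth near its singularity), and $|\alpha|<2k$ guarantees via \eqref{decayofderivatives} that the differentiated series converges absolutely and locally uniformly. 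Repeating the Fourier/aliasing argument for $D^{\alpha}Q_hf$ then yields \eqref{simultaneousofpol}, the drop from rate $h^{2k}$ to $h^{2k-|\alpha|}$ being caused by the fact that the sampling operator $\bold f\mapsto\{\bold f(\bold jh)\}$ does not commute with differentiation.

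The step I expect to be the main obstacle is the passage from pointwise control of the Fourier symbols to genuine $L^p$ operator-norm bounds when $p\neq 2$, together with the $L^p$ convergence of the aliasing series; for $p=2$ both are immediate from Plancherel, but for general $p$ they rest on the multiplier machinery behind \cite[Theorem~2.1]{LeijiaandCheney} and \cite{JiaandLei}. Since that machinery is available off the shelf, the proof in practice reduces to verifying its hypotheses --- which is exactly the content of Lemma~\ref{polyharmonicsplinepropert} --- and to the bookkeeping of the three simultaneous constraints on $|\alpha|$ displayed in \eqref{simultaneousofpol}.
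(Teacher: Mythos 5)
Your proposal is correct and follows essentially the same route as the paper, which gives no independent proof of this lemma but simply invokes \cite[Theorem 2.1]{LeijiaandCheney} together with the Jia--Lei Strang--Fix framework \cite{JiaandLei}, with the hypotheses supplied by Lemma \ref{polyharmonicsplinepropert}. Your additional Fourier-side sketch (the order-$2k$ vanishing of $\widehat{\psi_{\ell,k}}-1$ at the origin for the convolution part, and the Poisson-summation/aliasing argument controlled by \eqref{eq:KernelSumProp} for the Schoenberg part) is exactly the mechanism behind those cited results and is consistent with how the paper later argues in Theorem \ref{errorestimateforderi}.
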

 \subsection{The Helmholtz--Hodge decomposition}
The Helmholtz--Hodge decomposition of a vector field $\bold f$ defined in the whole space $\R^d$ is straightforward to describe via the Fourier transform. More precisely, let $1 \le p \le \infty$  and denote by $L^p(\R^d)$  the Banach space consisting of all Lebesgue measurable functions $f$ on $\R^d$ for which $\|f\|_{L^p(\R^d)} < \infty$. Let $\bold L^p(\R^d)$ be the space of all vector fields with components in  $L^p(\R^d)$. Then the vector field  $\bold f \in \bold L^2(\R^d)$ is divergence-free if and only if $\bm{\omega}^T \hat {\bold f}(\bm{\omega}) = 0$ almost everywhere, and  is curl-free only if $ \hat {\bold f}(\bm{\omega}) = \bm{\omega} \hat {p}$ for some $p$ in the classical Sobolev space $ W^1_2(\R^d)$. 
Hence, we can define the following
projection operators via the inverse Fourier transform $\mathfrak{F}^{-1}:\bold L^2(\R^d)\to\bold L^2(\R^d)$ as

\begin{equation}\label{projectionoperators_Fouriertransform}
    \mathcal{P}_{\text{div}} \bold f := \mathfrak{F}^{-1}\left(\left(\bold I_d-\frac{\bm{\omega}\bm{\omega}^T}{\|\bm{\omega}\|^2}\right) \hat{\bold f}(\bm{\omega})\right), \quad
        \mathcal{P}_{\text{curl}} \bold f := \mathfrak{F}^{-1}\left(\left( \frac{\bm{\omega}\bm{\omega}^T}{\|\bm{\omega}\|^2}\right) \hat{\bold f}(\bm{\omega})\right).
\end{equation}
We can show that the projection $ \mathcal{P}_{\text{div}} \bold f$ is divergence-free,
$\mathcal{P}_{\text{curl}} \bold f$ is curl-free, and that $ \mathcal{P}_{\text{div}} \bold f \perp  \mathcal{P}_{\text{curl}} \bold f$. Furthermore, we can verify that $\bold f = \mathcal{P}_{\text{div}} \bold f  +\mathcal{P}_{\text{curl}} \bold f $ is a unique decomposition of $\bold f $  into divergence-free and curl-free vector fields which are orthogonal in the $\bold L^2(\R^d)$ sense.

On the other hand,  the Helmholtz--Hodge decomposition of $\bold f$ defined over a bounded compact domain is much more involved as described in the following lemma (that is Proposition 2.1 in \cite{fw}).

\begin{lemma}\label{lem:LerayProjection}
 Let $\Omega \subset \R^d$ be a connected Lipschitz domain, $\bold f \in \bold L^2(\Omega)$ be such that
 $\nabla \cdot \bold f \in L^2(\Omega)$ and let $g\in H^{1/2}(\partial \Omega)$ satisfy $\langle g,1 \rangle_{L^2(\Omega)} = 0$. Then one has the unique decomposition $\bold f = \bold w + \nabla p$, where
 $p\in W_2^1(\Omega)$, and $\bold w \in \bold L^2(\Omega)$ with $\nabla \cdot \bold w = 0$ and $\bold w \cdot \bold n = g$ on $\partial \Omega$. Moreover, the function $p$ is uniquely determined up to a constant, and satisfies the bound
 \begin{equation}
     |p|_{W_2^1(\Omega)} = \|\nabla p \|_{L^2(\Omega)}\leq C(\|\nabla \cdot \bold f\|_{L^2(\Omega)} + \|\bold f \cdot \bold n - g\|_{H^{-1/2}(\partial \Omega)}),
 \end{equation}
 where $C$ is some constant independent of $\bold f$. 
\end{lemma}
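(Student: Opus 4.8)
The plan is to reduce the existence of the decomposition to the solvability of a single scalar Neumann problem and then read off all remaining properties from it. Since $\nabla\cdot\bold w=0$ forces $\Delta p=\nabla\cdot\bold f$ in $\Omega$, while $\bold w\cdot\bold n=g$ forces $\partial_{\bold n}p=\bold f\cdot\bold n-g$ on $\partial\Omega$, I would first set up the weak Neumann problem: find $p\in W_2^1(\Omega)$ such that
\begin{equation}\label{eq:weakNeumann}
\int_{\Omega}\nabla p\cdot\nabla\varphi\,d\bold x=\int_{\Omega}\bold f\cdot\nabla\varphi\,d\bold x-\langle g,\varphi\rangle_{\partial\Omega}\qquad\text{for all }\varphi\in W_2^1(\Omega),
\end{equation}
where $\langle\cdot,\cdot\rangle_{\partial\Omega}$ denotes the $H^{-1/2}$--$H^{1/2}$ duality pairing applied to the trace of $\varphi$. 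Since $\nabla\cdot\bold f\in L^2(\Omega)$, we have $\bold f\in H(\mathrm{div},\Omega)$, so the normal trace $\bold f\cdot\bold n$ is well defined in $H^{-1/2}(\partial\Omega)$ and the generalized divergence formula $\int_{\Omega}\bold f\cdot\nabla\varphi=-\int_{\Omega}(\nabla\cdot\bold f)\varphi+\langle\bold f\cdot\bold n,\varphi\rangle_{\partial\Omega}$ holds; substituting it into \eqref{eq:weakNeumann} rewrites the right-hand side as $-\int_{\Omega}(\nabla\cdot\bold f)\varphi+\langle\bold f\cdot\bold n-g,\varphi\rangle_{\partial\Omega}$, a form tailored to the a priori bound.

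I would then solve \eqref{eq:weakNeumann} by Lax--Milgram on the quotient Hilbert space $W_2^1(\Omega)/\R$ equipped with the norm $\varphi\mapsto\|\nabla\varphi\|_{L^2(\Omega)}$, which is an equivalent norm because $\Omega$ is connected and Lipschitz (Poincaré--Wirtinger inequality). On this space the bilinear form $(p,\varphi)\mapsto\int_{\Omega}\nabla p\cdot\nabla\varphi$ is bounded and coercive, and the functional $\varphi\mapsto\int_{\Omega}\bold f\cdot\nabla\varphi-\langle g,\varphi\rangle_{\partial\Omega}$ descends to the quotient precisely because of the Neumann compatibility condition: evaluating at $\varphi\equiv 1$ gives $-\langle g,1\rangle_{\partial\Omega}=0$ by hypothesis, which is equivalent to $-\int_{\Omega}\nabla\cdot\bold f+\langle\bold f\cdot\bold n-g,1\rangle_{\partial\Omega}=0$ by the divergence theorem. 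Boundedness of the functional on the quotient follows from the trace theorem together with Poincaré--Wirtinger. Lax--Milgram then produces a unique $p\in W_2^1(\Omega)/\R$, i.e.\ $p$ determined up to an additive constant; set $\bold w:=\bold f-\nabla p\in\bold L^2(\Omega)$.

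Finally I would verify the structural claims in turn. Testing \eqref{eq:weakNeumann} with $\varphi\in C_c^\infty(\Omega)$ annihilates the boundary term and gives $\int_{\Omega}\bold w\cdot\nabla\varphi=0$, hence $\nabla\cdot\bold w=0$ distributionally; in particular $\bold w\in H(\mathrm{div},\Omega)$ with vanishing divergence, so $\bold w\cdot\bold n$ is well defined in $H^{-1/2}(\partial\Omega)$, and applying the divergence formula to $\bold w$ and then \eqref{eq:weakNeumann} yields $\langle\bold w\cdot\bold n,\varphi\rangle_{\partial\Omega}=\int_{\Omega}\bold w\cdot\nabla\varphi=\langle g,\varphi\rangle_{\partial\Omega}$ for all $\varphi\in W_2^1(\Omega)$, whence $\bold w\cdot\bold n=g$ by density of $W_2^1$-traces in $H^{1/2}(\partial\Omega)$. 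For uniqueness, if $\bold f=\bold w_1+\nabla p_1=\bold w_2+\nabla p_2$ then $\nabla q:=\nabla(p_1-p_2)=\bold w_2-\bold w_1$ is divergence-free with vanishing normal trace, so $\int_{\Omega}\nabla q\cdot\nabla\varphi\,d\bold x=0$ for all $\varphi\in W_2^1(\Omega)$, and $\varphi=q$ forces $\nabla q=0$, hence $\nabla p_1=\nabla p_2$ and $\bold w_1=\bold w_2$. For the bound, I test the rewritten weak form with $\varphi=p-\bar p$ (the mean-zero representative), obtaining $\|\nabla p\|_{L^2(\Omega)}^2\le\|\nabla\cdot\bold f\|_{L^2(\Omega)}\|p-\bar p\|_{L^2(\Omega)}+\|\bold f\cdot\bold n-g\|_{H^{-1/2}(\partial\Omega)}\|p-\bar p\|_{H^{1/2}(\partial\Omega)}$, and then absorb $\|p-\bar p\|_{L^2(\Omega)}$ and $\|p-\bar p\|_{H^{1/2}(\partial\Omega)}$ by $C\|\nabla p\|_{L^2(\Omega)}$ via Poincaré--Wirtinger and the trace theorem to reach the stated estimate, recalling $|p|_{W_2^1(\Omega)}=\|\nabla p\|_{L^2(\Omega)}$. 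The main obstacle is not the variational step but the careful handling of Lipschitz-boundary trace theory --- continuity of the normal-trace operator on $H(\mathrm{div},\Omega)$, the associated Green's formula, and density of $W_2^1$-traces in $H^{1/2}(\partial\Omega)$ --- together with pinning down that the Neumann solvability condition is exactly what the hypothesis $\langle g,1\rangle=0$ supplies.
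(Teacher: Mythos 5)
Your proposal is correct, and it is essentially the argument behind the result as the paper uses it: the paper gives no proof of its own, quoting the statement as Proposition 2.1 of Fuselier and Wright \cite{fw}, whose proof is exactly this reduction to the weak Neumann problem for $p$ solved by Lax--Milgram on $W_2^1(\Omega)/\R$, with the normal-trace/Green's-formula machinery on $H(\mathrm{div},\Omega)$ supplying the boundary condition and the a priori bound. Your handling of the compatibility condition $\langle g,1\rangle_{\partial\Omega}=0$, the uniqueness argument, and the absorption of the Poincar\'e--Wirtinger and trace constants into $C$ are all sound.
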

\begin{remark}\label{rem:boundaryconditions}
In particular, when $g = 0$, the vector decomposition defined in Lemma \ref{lem:LerayProjection} gives rise to the Leray projector
\[\mathcal P_L \bold f := \bold w,\]
whoes orthogonal complement  is 
\[\mathcal P^{\perp}_L \bold f := \nabla p.\]
Consequently, $\bold f = \mathcal{P}_{L} \bold f  +\mathcal{P}^{\perp}_{L} \bold f $ is a unique decomposition of $\bold f $  into divergence-free and curl-free vector fields which are orthogonal in the $ \bold L^2(\Omega)$ sense.
\end{remark}
With the help of the above two projection techniques, we shall construct  a quasi-interpolation based approach for  Helmholtz--Hodge decomposition of a vector field $\bold f$ defined both in the whole space and bounded domains, respectively.

%%%%%%%%%%%%%%%%%%%%%%%%%%%%%%%%%%%%%%%%%%%%%%%%%%%%%%%%%%%%%%%%%%%%%%%%%%%%%%%%%%%%%%%%
                                 %The Main Results
%%%%%%%%%%%%%%%%%%%%%%%%%%%%%%%%%%%%%%%%%%%%%%%%%%%%%%%%%%%%%%%%%%%%%%%%%%%%%%%%%%%%%%%%

\section{The Main Results}\label{sec3}

This section first constructs  a family of (divergence-free, curl-free, and harmonic)    quasi-interpolants for vector-valued function approximation and derives simultaneous approximation orders to both the approximated function and its derivatives. Then we discuss the application of  divergence-free and curl-free  quasi-interpolants to the Helmholtz--Hodge decomposition. We start by constructing a general matrix kernel with certain properties.
\subsection{{ Divergence-free, curl-free, and harmonic matrix kernels}}
 Motivated by the papers \cite{DoduandRabut}, \cite{Fuselier, Fuselier1}, \cite{Wendland1}, it is natural to choose divergence-free  and curl-free matrix kernels in the form $[\Delta \bold I_d-\nabla\nabla^T]\Phi$  and $ \nabla \nabla^T\Phi$, respectively, with some \textit{a priori} determined function $\Phi$. In what follows we illustrate how to construct a general  matrix kernel  including divergence-free, curl-free, and even harmonic matrix kernels. In addition, to make  the following exposition transparent, we only demonstrate our construction process from commonly used polyharmonic splines as an example, but it should be noted that the basic idea is rather general and is still valid for many other functions (e.g., the fundamental solutions of some distributional operators  considered in \cite{DynandJackRon}). In the sequel, we assume that the components of $\bold f$ are all in the Sobolev space $W_p^r(\R^d)$, $r>d/2$.

   The divergence-free matrix
   kernel introduced in \cite{Gaoetal2} is given as
\begin{equation}\label{div}
 [\Delta {\bf I}_d - \nabla \nabla^T]\Delta^\ell\phi_{\ell+1}.
\end{equation}
Here $\phi_{\ell+1}$ is a fundamental solution of $\Delta^{\ell+1}$ in the distributional sense.  Moreover, we have  
\begin{equation}\label{repro1}
[\Delta {\bf I}_d - \nabla \nabla^T]\Delta^\ell \phi_{\ell+1}*{\bf f} = {\bf f}
\end{equation}
for any divergence-free vector-valued function  ${\bf f}$  (${\nabla \cdot {\bf f} = 0}$). Besides, motivated by the identity $\nabla \nabla^T = \Delta{\bf I}_d - \nabla \times \nabla \times$ (here and throughout, $\nabla \times \nabla \times$ is interpreted as a matrix-valued differential operator), we can  construct a curl-free matrix kernel 
\begin{equation}  \label{curl}
\nabla \nabla^T\Delta^\ell \phi_{\ell+1},
\end{equation}
 such that 
 for any curl-free vector-valued function ${\bf f}$ ( ${\nabla \times \bf f = 0}$), we have
\begin{equation}\label{repro2}
\begin{split}
 \nabla \nabla^T\Delta^\ell \phi_{\ell+1}*{\bf f}
 &= [\Delta {\bf I}_d - \nabla \times \nabla \times]\Delta^\ell\phi_{\ell+1}*{\bf f}\\
 &=\Delta^{\ell +1 }{\bf I}_d \phi_{\ell+1}*{\bf f} -   \nabla \times \nabla \times \Delta^\ell\phi_{\ell+1}*{\bf f}\\
&= {\bf f} -  \int_{\mathbb{R}^d} \nabla  \Delta^\ell\phi_{\ell+1}(\bold x-\bold t)\times (\nabla \times {\bf f}(\bold t)) d \bold t\\
&={\bf f}.
    \end{split}
    \end{equation}

   Furthermore, coupling the above two matrix kernels, we can even construct a harmonic matrix kernel (both divergence-free and curl-free)  in the form 
\begin{equation}\label{harmon}
 [\Delta {\bf I}_d - \nabla \nabla^T][\Delta{\bf I}_d - \nabla \times \nabla \times ]\Delta^\ell\phi_{\ell+2}.
\end{equation}
Similarly, we can get the reproduction identity
\begin{equation}\label{repro3}
\begin{split}
 &[\Delta {\bf I}_d - \nabla \nabla^T][\Delta{\bf I}_d - \nabla \times \nabla \times ]\Delta^\ell \phi_{\ell+2}*{\bf f}\\
 &=\Delta^{\ell+2}{\bf I}_d\phi_{\ell+2}*{\bf f} -\Delta^{\ell+1}{\bf I}_d\nabla\times\nabla\times(\phi_{\ell+2}*{\bf f}) \\
&  -\Delta^{\ell+1}{\bf I}_d\nabla\nabla^T(\phi_{\ell+2}*{\bf f}) +\Delta^{\ell}\nabla\nabla^T\nabla\times\nabla\times(\phi_{\ell+2}*{\bf f})\\
&={\bf f}-\Delta^{\ell+1}{\bf I}_d\int_{\mathbb{R}^d}\nabla\phi_{\ell+2}(\bold x-\bold t)\times\nabla\times{\bf f}(\bold t)d\bold t\\
&  -\Delta^{\ell+1}{\bf I}_d\int_{\mathbb{R}^d}\nabla\phi_{\ell+2}(\bold x-\bold t)\nabla\cdot{\bf f}(\bold t)d\bold t\\
&+\Delta^{\ell}\int_{\mathbb{R}^d}\nabla\nabla^T\nabla \phi_{\ell+2}(\bold x-\bold t)\times\nabla\times{\bf f}(\bold t)d\bold t \\
 &= {\bf f}
 \end{split}
\end{equation}
  for any harmonic vector field $\bold f$.

All together,  the above three  kernels (defined in Equations  \eqref{div},  \eqref{curl}, \eqref{harmon}) can be expressed via a general form (up to a constant) as
\begin{equation}\label{thefinal}
 [\eta\Delta {\bf I}_d - \beta\nabla \nabla^T](\nabla \nabla^T)^{\gamma} \Delta^\ell\phi_{\ell+\gamma+\max\{\eta,\beta\}}, \  \eta, \beta, \gamma \in\{0,1\}, \eta + \beta+ \gamma\neq 0.
  \end{equation}
We note that the reproduction properties (Equation \eqref{repro1}, Equation \eqref{repro2},  Equation \eqref{repro3}) demonstrate that the matrix kernel (\ref{thefinal})  behaves like a Dirac distribution when it is convolved with a vector field $\bold f$ satisfying the corresponding (divergence-free, curl-free, harmonic) properties. However,  the above matrix kernel is  \textbf{not} user friendly. For practical applications, we   discretize the differential operator $\Delta^{\ell}$   in Equation \eqref{thefinal} using the difference operator $(-1)^{\ell}q_{d,\ell,k}(\widetilde{\Delta})$ introduced in Subsection $2.2$ to get 
  \begin{equation}\label{divergencefreekernl}
  {}_{\eta,\beta,\gamma}\bm{\Psi}_{\ell,k}:=(-1)^{\ell}q_{d,\ell,k}(\widetilde{\Delta})[\eta\Delta {\bf I}_d - \beta\nabla \nabla^T](\nabla \nabla^T)^{\gamma}\phi_{\ell+\gamma+\max\{\eta,\beta\}}, \  \eta, \beta, \gamma \in\{0,1\}, \eta+ \beta+ \gamma\neq 0.
  \end{equation}
One can verify that  ${}_{\eta,\beta,\gamma}\bm{\Psi}_{\ell,k}$ includes \textbf{divergence-free} kernel (when $(\eta,\beta,\gamma)=(1,1,0)$), \textbf{curl-free} kernels (when $(\eta,\beta,\gamma)\in\{(0,1,1), (1,0,1), (0,1,0), (0,0,1)\}$ up to a constant $-1$),  and \textbf{harmonic} kernel (when $(\eta,\beta,\gamma)=(1,1,1)$). 

We proceed with  studying some other properties of the  matrix kernel \eqref{divergencefreekernl}. Similar to the above discussions, we can derive the reproduction identity
\begin{equation}\label{newrepro}
{}_{\eta,\beta,\gamma}\bm{\Psi}_{\ell,k}*\bold f=\bold I_d\psi_{\ell,k}*\bold f
\end{equation}
  for any vector-valued function $\bold f$ satisfying corresponding divergence-free/curl-free/harmonic properties. In addition, by noting  that derivatives of a divergence-free/curl-free/harmonic function are all divergence-free/curl-free/harmonic,  we can even get a general reproduction identity 
\begin{equation}\label{relationofsf}
 {}_{\eta,\beta,\gamma}\bm{\Psi}_{\ell,k}*(D^{\alpha}\mathbf{f})=\bold I_d\psi_{\ell,k}*(D^{\alpha}\mathbf{f}).
\end{equation}
More importantly, the following simultaneous approximation orders hold true.
\begin{lemma}\label{lem:convolutionerror}
 Define the $L_p$-norm  of a vector-valued function $\bold f=(f_1,f_2,\cdots, f_d)$ via $\|\mathbf{f}\|_{p,\R^d}=(\sum_{s=1}^d\|f_s\|_{p,\R^d}^p)^{1/p}$.   Denote by ${}_{\eta,\beta,\gamma}\bm{\Psi}_{\ell,k}^h(\bold x):=h^{-d}{}_{\eta,\beta,\gamma}\bm{\Psi}_{\ell,k}({\bold x}/h)$  a scaled version of  the matrix kernel ${}_{\eta,\beta,\gamma}\bm{\Psi}_{\ell,k}$ defined in Equation \eqref{divergencefreekernl}. Then, for any  divergence-free/curl-free/harmonic vector field $\mathbf{f}$ whose components   $f_s\in \mathcal{S}_p^r(\R^d)$, $s=1,2,\dots,d$,  we can  construct a  corresponding divergence-free/curl-free/harmonic matrix kernel ${}_{\eta,\beta,\gamma}\bm{\Psi}_{\ell,k}$ by appropriately choosing   values of parameters $\eta,\beta,\gamma$, such that
the  simultaneous error estimates
 \begin{equation}\label{conerror}
        \| {}_{\eta,\beta,\gamma}\bm{\Psi}_{\ell,k}^h*(D^{\alpha}\mathbf{f})-D^{\alpha}\mathbf{f}\|_{p,\R^d}=\mathcal{O}(h^{2k})
    \end{equation}
 hold true for any $0\leq |\alpha|\leq \min\{2k, r-2k\}$.
\end{lemma}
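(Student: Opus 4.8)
The plan is to reduce the matrix-kernel convolution to the scalar polyharmonic-spline convolution of Lemma \ref{convolution}, applied componentwise, by way of the reproduction identity \eqref{relationofsf}. First I would record the dilated form of that identity,
\[
{}_{\eta,\beta,\gamma}\bm{\Psi}_{\ell,k}^h*\mathbf{g}=\mathbf{I}_d\,\psi_{\ell,k}^h*\mathbf{g},
\]
valid for every vector field $\mathbf{g}$ possessing the matching divergence-free/curl-free/harmonic property, where $\psi_{\ell,k}^h(\mathbf{x})=h^{-d}\psi_{\ell,k}(\mathbf{x}/h)$ is the scaled polyharmonic spline of Lemma \ref{convolution}. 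This is an immediate consequence of \eqref{newrepro}: all the operators involved ($\widetilde{\Delta}$, $\Delta$, $\nabla\nabla^T$, $\nabla\times\nabla\times$) are translation invariant, so on the Fourier side \eqref{newrepro} says the matrix symbol of ${}_{\eta,\beta,\gamma}\bm{\Psi}_{\ell,k}$ coincides with $\widehat{\psi_{\ell,k}}\,\mathbf{I}_d$ on the subspace picked out by the structural constraint on $\hat{\mathbf{g}}(\bm{\omega})$ (e.g.\ $\bm{\omega}^T\hat{\mathbf{g}}(\bm{\omega})=0$ in the divergence-free case), and both this subspace and the change of variable $\bm{\omega}\mapsto h\bm{\omega}$ induced by the scaling leave that coincidence intact. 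Applying the dilated identity with $\mathbf{g}=D^{\alpha}\mathbf{f}$ is legitimate because $D^{\alpha}\mathbf{f}$ inherits the structural property of $\mathbf{f}$ — one has $\nabla\cdot(D^{\alpha}\mathbf{f})=D^{\alpha}(\nabla\cdot\mathbf{f})$, $\nabla\times(D^{\alpha}\mathbf{f})=D^{\alpha}(\nabla\times\mathbf{f})$ and $\Delta(D^{\alpha}\mathbf{f})=D^{\alpha}(\Delta\mathbf{f})$, so each of the three classes is closed under differentiation — and $f_s\in\mathcal{S}_p^r(\R^d)$ furnishes the smoothness and integrability required for the (singular) convolutions, the entries of ${}_{\eta,\beta,\gamma}\bm{\Psi}_{\ell,k}$ being derivatives of $\psi_{\ell,k}$ that decay as in Lemma \ref{polyharmonicsplinepropert}.

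Second, I would read off the componentwise consequence: the right-hand side $\mathbf{I}_d\,\psi_{\ell,k}^h*(D^{\alpha}\mathbf{f})$ has $s$-th component $\psi_{\ell,k}^h*(D^{\alpha}f_s)$, and since $f_s\in\mathcal{S}_p^r(\R^d)$, Lemma \ref{convolution} gives
\[
\bigl\|\psi_{\ell,k}^h*(D^{\alpha}f_s)-D^{\alpha}f_s\bigr\|_{p,\R^d}=\mathcal{O}(h^{2k}),\qquad 0\le|\alpha|\le\min\{2k,\,r-2k\},\quad s=1,\dots,d.
\]

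Third, I would assemble the vector estimate. By the definition $\|\mathbf{g}\|_{p,\R^d}=\bigl(\sum_{s=1}^d\|g_s\|_{p,\R^d}^p\bigr)^{1/p}$ and the dilated reproduction identity,
\[
\bigl\|{}_{\eta,\beta,\gamma}\bm{\Psi}_{\ell,k}^h*(D^{\alpha}\mathbf{f})-D^{\alpha}\mathbf{f}\bigr\|_{p,\R^d}^p=\sum_{s=1}^d\bigl\|\psi_{\ell,k}^h*(D^{\alpha}f_s)-D^{\alpha}f_s\bigr\|_{p,\R^d}^p=\mathcal{O}(h^{2kp}),
\]
and taking $p$-th roots yields \eqref{conerror}; the case $p=\infty$ is identical with the sum replaced by a maximum over $s$. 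The phrase ``appropriately choosing $\eta,\beta,\gamma$'' is precisely the classification recorded after \eqref{divergencefreekernl}: take $(\eta,\beta,\gamma)=(1,1,0)$ when $\mathbf{f}$ is divergence-free, any of $(0,1,1),(1,0,1),(0,1,0),(0,0,1)$ when $\mathbf{f}$ is curl-free, and $(1,1,1)$ when $\mathbf{f}$ is harmonic.

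The step I expect to be the main obstacle is the first: making the reproduction identity rigorous for $\mathbf{g}=D^{\alpha}\mathbf{f}$ rather than for a smooth, compactly supported field, i.e.\ verifying that the remainder integrals appearing in \eqref{repro2}--\eqref{repro3} (those built from $\nabla\times\mathbf{f}$, $\nabla\cdot\mathbf{f}$ and their derivatives) genuinely vanish after differentiation and that the convolutions with the polyharmonic fundamental solution $\phi_{\ell+\gamma+\max\{\eta,\beta\}}$ are well defined for Sobolev-class data; the Fubini and integration-by-parts steps have to be underpinned by the decay bounds of Lemma \ref{polyharmonicsplinepropert} and the Fourier-side integrability built into $\mathcal{S}_p^r(\R^d)$. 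Once this bookkeeping is in place, the remaining reduction to the scalar result of Lemma \ref{convolution} is routine.
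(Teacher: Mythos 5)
Your proposal is correct and follows essentially the same route as the paper: invoke the reproduction identity \eqref{newrepro}/\eqref{relationofsf} (in scaled form) to replace the matrix kernel by $\bold I_d\psi_{\ell,k}^h$, reduce to componentwise scalar estimates via the definition of the vector $L_p$-norm, and conclude from Lemma \ref{convolution}. The extra care you take in justifying the dilated identity and the closure of each structural class under differentiation is detail the paper simply asserts, not a different argument.
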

\begin{proof}
A special case of the estimate (\ref{conerror}) was proven in \cite{Gaoetal2} for a   divergence-free vector field and corresponding divergence-free matrix kernel. Here we consider the general case.

Based on the  general reproduction identity (that is Equation \eqref{newrepro})
\begin{equation*}
 {}_{\eta,\beta,\gamma}\bm{\Psi}_{\ell,k}*(D^{\alpha}\mathbf{f})=\bold I_d\psi_{\ell,k}*(D^{\alpha}\mathbf{f}),
\end{equation*} we only  need to derive simultaneous error estimates
 $$\|\bold I_d\mathbf{\psi}_{\ell,k}^h*(D^{\alpha}\mathbf{f})-D^{\alpha}\mathbf{f}\|_{p,\R^d},$$
where $\mathbf{\psi}_{\ell,k}^h(\bold x)=h^{-d}\mathbf{\psi}_{\ell,k}(\bold x/h)$ is a scaled version of the level-$k\ell$ polyharmonic spline $\mathbf{\psi}_{\ell,k}$ defined in Equation \eqref{polyharmonicspline}. Moreover, the definition of the  $L_p$-norm of a vector-valued function implies that  it suffices to show that $\|\bold I_d\mathbf{\psi}_{\ell,k}^h*(D^{\alpha}\mathbf{f})-D^{\alpha}\mathbf{f}\|_{p,\R^d}=\mathcal{O}(h^{2k})$ holds componentwise. Namely,
 \begin{equation*}
\|(D^{\alpha}f_s)*\psi_{\ell,k}^h-D^{\alpha}f_s\|_{p,\R^d}=\mathcal{O}(h^{2k}), \ \text{for}\ s=1,2,\dots, d.
\end{equation*}
This follows directly from Lemma \ref{convolution}.   Thus the lemma holds.
\end{proof}

Next we study properties of the matrix kernel $ {}_{\eta,\beta,\gamma}\bm{\Psi}_{\ell,k}$ in the Fourier domain.
According to the theory of the Fourier transform \cite{Stein}, we have
\begin{equation}\label{fouriertrans}
{}_{\eta,\beta,\gamma}\widehat{\bm{\Psi}_{\ell,k}}(\bm{\omega})=\widehat{\psi_{\ell,k}}(\bm{\omega})[\eta\|\bm{\omega}\|^2\bold I_d-\beta\bm{\omega}\bm{\omega}^T](\bm{\omega}\bm{\omega}^T)^{\gamma}/\|\bm{\omega}\|^{2+\gamma+\max\{\eta,\beta\}}.
\end{equation}
 Besides, since $\psi_{\ell,k}$ satisfies Strang--Fix conditions of order $2k$ (that is Equation \eqref{Strangfix}), we can get
\begin{equation}\label{fouriertrans2}
D^{\alpha}{}_{\eta,\beta,\gamma}\widehat{\bm{\Psi}_{\ell,k}}(2\pi \bold j)=0, \quad \bold j\in \mathbb{Z}^d/\{\bold 0\}, \ \text{for} \ 0\leq |\alpha|<2k.
\end{equation}
This equation will play a vital role in establishing the convergence analysis in Subsection \ref{constructionRd}.

We go further with using  the  matrix kernel ${}_{\eta,\beta,\gamma}\bm{\Psi}_{\ell,k}$ for kernel projections.
%Figure \ref{fig:divfreekernel} is a sketch of our matrix kernel with $d=2$, $\ell=k=3$.

\subsection{Kernel projections}
Note that Lemma \ref{lem:convolutionerror} is only valid for a  vector field $\bold f$ that has the same  (divergence-free, curl-free, harmonic) property of the matrix kernel ${}_{\eta,\beta,\gamma}\bm{\Psi}_{\ell,k}$.  Fortunately, the Helmholtz--Hodge decomposition  states that any sufficiently smooth vector field, ${\bf f}$,
can be decomposed as the sum of a divergence-free component, ${\bf w}$, and a curl-free component, ${\nabla p}$, that is,
\[ {\bf f}  = {\bf w}  + {\nabla p}.\]
Under such a decomposition, one can show that the convolution of ${\bf f }$ with the divergence-free kernel
returns only the divergence-free part of ${\bf f }$:
\begin{equation*}
\begin{array}{rcl}
 [\Delta {\bf I}_d - \nabla \nabla^T]\Delta^\ell \phi_{\ell+1}*{\bf f}& = &  [\Delta {\bf I}_d - \nabla \nabla^T]\Delta^\ell\phi_{\ell+1}*({\bf w}  + {\nabla p})\\
                                                                                                     & = & {\bf w} + \Delta^\ell [\Delta {\bf I}_d - \nabla \nabla^T]\phi_{\ell+1}*{\nabla p}\\
                                                                                                     & = & {\bf w} + \Delta^{\ell +1} {\bf I}_d\phi_{\ell+1}*{\nabla p} - \Delta^{\ell }\nabla \nabla^T\phi_{\ell+1}*{\nabla p}\\
                                                                                                     & = & {\bf w} + ({\nabla p} - {\nabla p})\\
                                                                                                     & = & {\bf w}.
 \end{array}
\end{equation*}
Therefore, comparing the above calculation to Equation  \eqref{relationofsf} and setting  $ \bm{\Psi}_{\ell,k}^{div}={}_{1,1,0}\bm{\Psi}_{\ell,k}$, we have the \textbf{divergence-free projection}:
\begin{equation}\label{relationofsf:projection}
 \bm{\Psi}_{\ell,k}^{div}*(D^{\alpha}\mathbf{f})=\bold I_d\psi_{\ell,k}*(D^{\alpha}\mathbf{w}).
\end{equation}
Likewise, the convolution of ${\bf f }$ with the curl-free kernel
returns only the curl-free part of ${\bf f }$:

\begin{equation*}
\begin{array}{rcl}
 \Delta^\ell [\Delta {\bf I}_d - \nabla \times \nabla \times]\phi_{\ell+1}*{\bf f}& = & \Delta^\ell [\Delta {\bf I}_d -\nabla \times \nabla \times]\phi_{\ell+1}*({\bf w}  + {\nabla p})\\
                                                                                                     & = & {\nabla p} + \Delta^\ell [\Delta {\bf I}_d - \nabla \times \nabla \times\phi_{\ell+1}*{\bf w}\\
                                                                                                     & = & {\nabla p} + \Delta^{\ell +1} {\bf I}_d\phi_{\ell+1}*{\bf w} - \Delta^{\ell }\nabla \times \nabla \times\phi_{\ell+1}*{\bf w}\\
                                                                                                     & = & {\nabla p} + ({\bf w} - {\bf w})\\
                                                                                                     & = & {\nabla p}.
 \end{array}
\end{equation*}
Similarly, one can get the \textbf{curl-free projection}:
\begin{equation}\label{relationofsf2:projection}
\bm{\Psi}^{curl}_{\ell,k}*(D^{\alpha}\mathbf{f}) =\bold I_d\psi_{\ell,k}*(D^{\alpha}(\nabla p)).
\end{equation}
Here
\begin{equation}\label{curlfreeprojectionkernel}
\bm{\Psi}^{curl}_{\ell,k}(\bold x) \in \left\{ -{}_{0,1,1}\bm{\Psi}_{\ell,k}(\bold x), \,
{}_{1,0,1}\bm{\Psi}_{\ell,k}(\bold x),\, -{}_{0,1,0}\bm{\Psi}_{\ell,k}(\bold x),\, {}_{0,0,1}\bm{\Psi}_{\ell,k}(\bold x)\right\}.
\end{equation}
Consequently, parallel with Lemma \ref{lem:convolutionerror},  we have the following simultaneous error estimates for a general vector field $\bold f$. 
\begin{lemma}\label{projectionconvolutionerror}
 Let $\mathbf{f}$ be a vector field whose components satisfy $f_s\in \mathcal{S}_p^r(\R^d)$, $s=1,2,\dots,d$.   Suppose that  $\bold f$ has the Helmholtz--Hodge decomposition: $\mathbf{f} = {\bf w}  + {\nabla p}$, where $\nabla^T {\bf w} = 0$ and $\nabla\times ({\nabla p}) = \bold 0$ with $\partial p / \partial x^s\in \mathcal{S}_p^r(\R^d)$, $s=1,2,\dots,d$.
 Define its $L_p$-norm via $\|\mathbf{f}\|_{p,\R^d}=(\sum_{s=1}^d\|f_s\|_{p,\R^d}^p)^{1/p}$. Let $\bm{\Psi}_{\ell,k}^{div,h}(\bold x):=h^{-d}\bm{\Psi}_{\ell,k}^{div}({\bold x}/h)$ and $\bm{\Psi}_{\ell,k}^{curl,h}(\bold x):=h^{-d}\bm{\Psi}_{\ell,k}^{curl}({\bold x}/h)$ be  scaled  matrix kernels.
 Then the following simultaneous error estimates hold true for any $0\leq |\alpha|\leq \min\{2k, r-2k\}$:
\begin{enumerate}
    \item For the divergence-free projection
     \begin{equation}\label{projection}
        \|\bm{\Psi}_{\ell,k}^{div,h}*(D^{\alpha}\mathbf{f})-D^{\alpha}\mathbf{w}\|_{p,\R^d}=\mathcal{O}(h^{2k}).
    \end{equation}
    \item For the curl-free projection
         \begin{equation}\label{conerror2}
        \|\bm{\Psi}_{\ell,k}^{curl,h}*(D^{\alpha}\mathbf{f})-D^{\alpha}(\nabla{ p)}\|_{p,\R^d}=\mathcal{O}(h^{2k}).
    \end{equation}       
\end{enumerate}
 \end{lemma}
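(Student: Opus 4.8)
The plan is to reduce Lemma \ref{projectionconvolutionerror} to Lemma \ref{lem:convolutionerror} via the kernel projection identities \eqref{relationofsf:projection} and \eqref{relationofsf2:projection}. The key observation is that the right-hand sides of those identities isolate a single component of the Helmholtz--Hodge decomposition: convolving $\bold f$ with the divergence-free kernel yields $\bold I_d\psi_{\ell,k}*(D^{\alpha}\bold w)$, and convolving with the curl-free kernel yields $\bold I_d\psi_{\ell,k}*(D^{\alpha}(\nabla p))$. So for part (1), I would first invoke \eqref{relationofsf:projection} (in scaled form) to write
\begin{equation*}
\bm{\Psi}_{\ell,k}^{div,h}*(D^{\alpha}\bold f) - D^{\alpha}\bold w = \bold I_d\psi_{\ell,k}^h*(D^{\alpha}\bold w) - D^{\alpha}\bold w,
\end{equation*}
and similarly for part (2) with $\bold w$ replaced by $\nabla p$. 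This turns the vector projection error into the ordinary (scalar, componentwise) convolution-approximation error already controlled in Lemma \ref{convolution}.

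Next I would record the hypotheses needed to apply Lemma \ref{convolution}: for part (1), the components $w_s$ of $\bold w$ must lie in $\mathcal{S}_p^r(\R^d)$; for part (2), the components $\partial p/\partial x^s$ of $\nabla p$ must lie in $\mathcal{S}_p^r(\R^d)$ — the latter is assumed directly in the statement. For $\bold w$, I would note that since $\bold f = \bold w + \nabla p$ with $f_s\in\mathcal{S}_p^r(\R^d)$ and $\partial p/\partial x^s\in\mathcal{S}_p^r(\R^d)$, linearity of the space $\mathcal{S}_p^r(\R^d)$ gives $w_s = f_s - \partial p/\partial x^s\in\mathcal{S}_p^r(\R^d)$. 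Then, exactly as in the proof of Lemma \ref{lem:convolutionerror}, I would pass to the componentwise $L_p$-norm using the definition $\|\bold g\|_{p,\R^d}=(\sum_{s=1}^d\|g_s\|_{p,\R^d}^p)^{1/p}$, so that it suffices to bound $\|(D^{\alpha}w_s)*\psi_{\ell,k}^h - D^{\alpha}w_s\|_{p,\R^d}$ (resp.\ with $w_s$ replaced by $\partial p/\partial x^s$) for each $s$. Lemma \ref{convolution} then gives $\mathcal{O}(h^{2k})$ for every $0\leq|\alpha|\leq\min\{2k,r-2k\}$, and summing over $s$ preserves the rate.

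One technical point I would be careful about is verifying that the algebraic reproduction identities \eqref{relationofsf:projection} and \eqref{relationofsf2:projection} are legitimate in the scaled convolution setting — i.e.\ that scaling commutes with the differential-operator manipulations used to derive them — and that the displayed formal computations (using $\Delta^{\ell+1}\bold I_d\phi_{\ell+1}*\bold g=\bold g$ and the vanishing of $\nabla\times\nabla\times$ on curl-free fields, resp.\ $\nabla\nabla^T$ on divergence-free fields) hold rigorously for fields in $\mathcal{S}_p^r(\R^d)$ rather than merely formally; this is where the regularity assumption $r>d/2$ and the decay estimate \eqref{decayofderivatives} enter, guaranteeing the convolutions and the integration-by-parts steps are well defined. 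I expect this verification — rather than the final error bound, which is essentially a citation of Lemma \ref{convolution} — to be the main obstacle, but it parallels the argument already carried out in \cite{Gaoetal2} for the purely divergence-free case and should go through with only notational changes.
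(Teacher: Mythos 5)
Your proposal is correct and follows essentially the same route as the paper: the paper's (one-line) proof appeals to the projection identities \eqref{relationofsf:projection}--\eqref{relationofsf2:projection} established just before the lemma and then repeats the componentwise reduction to Lemma \ref{convolution} used in Lemma \ref{lem:convolutionerror}. Your added checks --- that $w_s = f_s - \partial p/\partial x^s \in \mathcal{S}_p^r(\R^d)$ by linearity, and that the reproduction identities survive scaling --- are details the paper leaves implicit, and they are handled correctly.
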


\begin{proof}
  Observing that the Helmholtz--Hodge decomposition holds true for any  $\bold f$ whose components are all in $ \mathcal{S}_p^r(\R^d) \subset L^2(\R)$,   the proof is similar to that of Lemma \ref{lem:convolutionerror}.
\end{proof}
\subsection{Construction of  quasi-interpolation schemes}\label{constructionRd}
Subsection $3.1$  demonstrates that convolution of a vector-valued (divergence-free, curl-free, harmonic) function $\bold f$ (as well as its derivatives) with corresponding scaled (divergence-free, curl-free, harmonic)  matrix kernel ${}_{\eta,\beta,\gamma}\bm{\Psi}_{\ell,k}^h$ provides a fair approximation to $\bold f$ (as well as its derivatives).  Subsection $3.2$, on the other hand, shows that convolution of scaled divergence-free/curl-free matrix kernel ${}_{\eta,\beta,\gamma}\bm{\Psi}_{\ell,k}^h$ with a \textbf{general} vector field $\bold f$  (as well as its derivatives) approximates the divergence-free/curl-free part  (as well as its derivatives) of the Helmholtz--Hodge decomposition of $\bold f$, respectively. However, in practical applications, we only have discrete values of a vector field. To make the above results more available,  we have to discretize these convolution sequences with certain quadrature rules.
This subsection proceeds with  discretization of the above convolution sequences to construct corresponding quasi-interpolation schemes. We first construct  quasi-interpolation for vector-valued function approximation defined in the whole space $\R^d$ and study its simultaneous approximation orders. Then we extend our idea to construct corresponding quasi-interpolation schemes defined over a bounded domain $\Omega\subset \R^d$.
\subsubsection{Quasi-interpolation defined in the whole space $\R^d$}
 Let $\{\bold jh\}_{\bold j\in \mathbb{Z}^d}$ be equidistant centers  over the whole space $\R^d$ and $\{\mathbf{f}(\bold jh)\}_{\bold j\in \mathbb{Z}^d}$ be corresponding sampling data.  We  first discretize the convolution sequence $${}_{\eta,\beta,\gamma}\bm{\Psi}_{\ell,k}^h*\bold f(\bold x)=\int_{\R^d}{}_{\eta,\beta,\gamma}\bm{\Psi}_{\ell,k}^h(\bold x-\bold t)\bold f(\bold t)d\bold t$$ via the rectangular rule to  construct a quasi-interpolation scheme in the Schoenberg form
\begin{equation}\label{quasiinterpolationofourpaper}
Q_h\mathbf{f}(\bold x):=\sum_{\bold j\in \mathbb{Z}^d}{}_{\eta,\beta,\gamma}\bm{\Psi}_{\ell,k}(\bold x/h-\bold j)\mathbf{f}(\bold jh).
\end{equation}
Furthermore, we can even use $D^{\alpha}Q_h\mathbf{f}$ directly to approximate the corresponding divergence-free/curl-free part (as well as its derivatives) of $\mathbf{f}$ and get the following simultaneous error estimates.

\begin{theorem}\label{errorestimateforderi}
Let $\mathbf{f}$ be a vector field whose components $f_s\in \mathcal{S}_p^r(\R^d)$, $s=1,2,\dots, d$. Let $\bold g$ be a corresponding vector field satisfying: $\bold g=\bold f$ if $\bold f$ is divergence-free/curl-free/ harmonic, or else, $\bold g$ denotes the divergence-free part ($\bold g=\bold w$) or curl-free part ($\bold g=\nabla p$) of $\bold f$.  Let $Q_h\bold f$ be defined as in Equation \eqref{quasiinterpolationofourpaper} with ${}_{\eta,\beta,\gamma}\bm{\Psi}_{\ell,k}$ being a  matrix kernel constructed in Equation \eqref{divergencefreekernl}. Then for any $\bold g$, we can construct a corresponding matrix kernel ${}_{\eta,\beta,\gamma}\bm{\Psi}_{\ell,k}$ (having the same property as that of $\bold g$ by appropriately choosing values of $\eta,\beta,\gamma$), such that the following simultaneous error estimates 
$$\|D^{\alpha}Q_h\mathbf{f}-D^{\alpha}\mathbf{g}\|_{p,\R^d}=\mathcal{O}(h^{2k-|\alpha|})$$ holds true for any $0\leq |\alpha|< \min\{2k, r-2k,2\ell-d-1\}$.
\end{theorem}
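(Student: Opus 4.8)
The plan is to reduce the statement to the convolution error estimate of Lemma~\ref{projectionconvolutionerror} (equivalently Lemma~\ref{lem:convolutionerror}) plus a quadrature-error analysis, exactly the strategy underlying Lemma~\ref{convolution} for the scalar Schoenberg quasi-interpolant. First I would write the total error as a sum of two pieces,
\begin{equation*}
D^{\alpha}Q_h\mathbf{f}-D^{\alpha}\mathbf{g}
=\bigl(D^{\alpha}Q_h\mathbf{f}-{}_{\eta,\beta,\gamma}\bm{\Psi}_{\ell,k}^h*(D^{\alpha}\mathbf{f})\bigr)
+\bigl({}_{\eta,\beta,\gamma}\bm{\Psi}_{\ell,k}^h*(D^{\alpha}\mathbf{f})-D^{\alpha}\mathbf{g}\bigr),
\end{equation*}
where the matrix kernel is chosen to have the same structure (divergence-free, curl-free, or harmonic) as $\mathbf g$. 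The second bracketed term is bounded by $\mathcal{O}(h^{2k})$: when $\mathbf f$ is itself divergence-free/curl-free/harmonic this is precisely \eqref{conerror} of Lemma~\ref{lem:convolutionerror}, and when $\mathbf g$ is the divergence-free part $\mathbf w$ or curl-free part $\nabla p$ of a general $\mathbf f$ it is precisely \eqref{projection}/\eqref{conerror2} of Lemma~\ref{projectionconvolutionerror}. So the work is entirely in the first (quadrature) term.

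For the quadrature term I would mimic the proof of \cite[Theorem~2.1]{LeijiaandCheney} / Lemma~\ref{convolution} componentwise, using the vector $L_p$-norm defined in the statement so that it suffices to bound each scalar component $D^{\alpha}(Q_h\mathbf f)_s-\bigl({}_{\eta,\beta,\gamma}\bm{\Psi}_{\ell,k}^h*(D^{\alpha}\mathbf f)\bigr)_s$. Each such component is, up to the fixed matrix-valued differential operator, a scalar Schoenberg-type quasi-interpolant built from the kernel entries of ${}_{\eta,\beta,\gamma}\bm{\Psi}_{\ell,k}$ together with the scalar data $f_t(\mathbf j h)$. The key structural facts I would invoke are: (i) the decay estimate \eqref{decayofderivatives} for $\psi_{\ell,k}$ and its derivatives, which transfers to the entries of ${}_{\eta,\beta,\gamma}\bm{\Psi}_{\ell,k}$ since those entries are obtained by applying $[\eta\Delta\mathbf I_d-\beta\nabla\nabla^T](\nabla\nabla^T)^{\gamma}$ to $\phi_{\ell+\gamma+\max\{\eta,\beta\}}$ (so the kernel entries decay like $\|\mathbf x\|^{-d-2k+|\beta|}$ for the relevant range of $|\beta|$, guaranteeing the series converges and the ansatz is well defined for $f_s\in\mathcal S_p^r$, $r>d/2$); and (ii) the Strang--Fix moment conditions \eqref{fouriertrans2}, $D^{\beta}{}_{\eta,\beta,\gamma}\widehat{\bm{\Psi}_{\ell,k}}(2\pi\mathbf j)=0$ for $\mathbf j\neq\mathbf 0$ and $0\le|\beta|<2k$, which are the hypothesis needed to apply the Jia--Lei/Lei--Jia--Cheney machinery and obtain the sampling-vs-convolution discrepancy of order $h^{2k}$ for each component. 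The reproduction identity \eqref{relationofsf} (and its projection versions \eqref{relationofsf:projection}, \eqref{relationofsf2:projection}) lets me commute $D^{\alpha}$ past the convolution and identify the target as $D^{\alpha}\mathbf g$ rather than $D^{\alpha}\mathbf f$, so that the two displayed error pieces indeed telescope to the claimed quantity.

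Putting the pieces together gives
\begin{equation*}
\|D^{\alpha}Q_h\mathbf f-D^{\alpha}\mathbf g\|_{p,\R^d}
\le \|D^{\alpha}Q_h\mathbf f-{}_{\eta,\beta,\gamma}\bm{\Psi}_{\ell,k}^h*(D^{\alpha}\mathbf f)\|_{p,\R^d}
+\|{}_{\eta,\beta,\gamma}\bm{\Psi}_{\ell,k}^h*(D^{\alpha}\mathbf f)-D^{\alpha}\mathbf g\|_{p,\R^d}
=\mathcal{O}(h^{2k-|\alpha|})+\mathcal{O}(h^{2k}),
\end{equation*}
and since $|\alpha|\ge 0$ the dominant rate is $h^{2k-|\alpha|}$, which is the assertion; the restriction $0\le|\alpha|<\min\{2k,r-2k,2\ell-d-1\}$ enters exactly as in \eqref{simultaneousofpol}, with $r-2k$ controlling the smoothness consumed by the convolution step, $2k$ the Strang--Fix order, and $2\ell-d-1$ the decay of the relevant derivatives of $\phi_{\ell+\cdots}$ needed for the Schoenberg series of $D^{\alpha}Q_h\mathbf f$ to make sense. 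The main obstacle I anticipate is the quadrature step for the \emph{derivative} quasi-interpolant $D^{\alpha}Q_h\mathbf f$: one must verify that applying $D^{\alpha}$ to the Schoenberg sum (which differentiates the kernel, not the data) still yields a quasi-interpolant whose kernel entries satisfy Strang--Fix conditions of order $2k-|\alpha|$ and adequate decay, so that the $h^{2k-|\alpha|}$ rate — rather than a worse one — is genuinely attained; this is where the constraint $|\alpha|<2\ell-d-1$ is forced, and it is the only place where a careful (but routine, following Jia--Lei) bookkeeping argument is required.
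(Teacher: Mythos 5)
Your proposal is correct and follows essentially the same route as the paper: the paper likewise splits $D^{\alpha}Q_h\mathbf f-D^{\alpha}\mathbf g$ into the convolution error ${}_{\eta,\beta,\gamma}\bm{\Psi}_{\ell,k}^h*(D^{\alpha}\mathbf f)-D^{\alpha}\mathbf g$ (handled by Lemmas \ref{lem:convolutionerror} and \ref{projectionconvolutionerror}) plus an aliasing/quadrature term, which it bounds by $\mathcal{O}(h^{2k-|\alpha|})$ exactly as you indicate, via the Strang--Fix conditions \eqref{fouriertrans2} together with a Taylor expansion of $\widehat{{}_{\eta,\beta,\gamma}\bm{\Psi}_{\ell,k}}$ about $2\pi\mathbf j$ and the membership $\widehat{\psi_{\ell,k}}\in\mathcal{M}^{2k}(\R^d)$. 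The only cosmetic difference is that the paper carries out the quadrature estimate explicitly on the Fourier side rather than citing the Lei--Jia--Cheney machinery componentwise.
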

\begin{proof}
 Fix a point $\bold x\in \R^d$. We first rewrite  $D^{\alpha}Q_h\mathbf{f}(\bold x)-D^{\alpha}\mathbf{g}(\bold x)$ using the Fourier transform as
\begin{equation*}
\begin{split}
&D^{\alpha}Q_h\mathbf{f}(\bold x)-D^{\alpha}\mathbf{g}(\bold x)\\
&=(-i)^{|\alpha|}(2\pi)^{-d/2}\int_{\R^d}\Bigg[h^{-|\alpha|}\sum_{\bold j\in \mathbb{Z}^d}(h\bm{\omega}+2\bold j\pi)^{\alpha}\widehat{{}_{\eta,\beta,\gamma}\bm{\Psi}_{\ell,k}}(h\bm{\omega}+2\bold j\pi)\hat{\mathbf{f}}(\bm{\omega})e^{2i\pi \bold j\bold x/h}-\bm{\omega}^{\alpha}\bold I_d\hat{\mathbf{g}}(\bm{\omega})\Bigg]e^{i\bold x\bm{\omega}}d\bm{\omega}.
\end{split}
\end{equation*} Then we split the above integrand into two parts to get
\begin{equation*}
\begin{split}
&|D^{\alpha}Q_h\mathbf{f}(\bold x)-D^{\alpha}\mathbf{g}(\bold x)|\\
&\leq \Bigg|(2\pi)^{-d/2}\int_{\R^d}[\hat{\mathbf{f}}(\bm{\omega})\widehat{{}_{\eta,\beta,\gamma}\bm{\Psi}_{\ell,k}}(h\bm{\omega})-\bold I_d\hat{\mathbf{g}}(\bm{\omega})]\bm{\omega}^{\alpha}e^{i\bold x\bm{\omega}}d\bm{\omega}\Bigg|\\
&+h^{-|\alpha|}(2\pi)^{-d/2}\Bigg|\int_{\R^d}\sum_{\bold j\in \mathbb{Z}^d/\{\bold 0\}}(h\bm{\omega}+2\bold j\pi)^{\alpha}\widehat{{}_{\eta,\beta,\gamma}\bm{\Psi}_{\ell,k}}(h\bm{\omega}+2\bold j\pi) \hat{\mathbf{f}}(\bm{\omega})e^{i\bold x\bm{\omega}}e^{2i\bold j\pi \bold x/h}d\bm{\omega}\Bigg|\\
&\leq|{}_{\eta,\beta,\gamma}\bm{\Psi}_{\ell,k}^h*(D^{\alpha}\mathbf{f})(\bold x)-D^{\alpha}\mathbf{g}(\bold x)|\\
&+h^{-|\alpha|}(2\pi)^{-d/2}\int_{\R^d}\sum_{\bold j\in \mathbb{Z}^d/\{\bold 0\}}|(h\bm{\omega}+2\bold j\pi)^{\alpha}\widehat{{}_{\eta,\beta,\gamma}\bm{\Psi}_{\ell,k}}(h\bm{\omega}+2\bold j\pi) \hat{\mathbf{f}}(\bm{\omega})|d\bm{\omega}.
\end{split}
\end{equation*}
This together with Lemma \ref{lem:convolutionerror} and Lemma \ref{projectionconvolutionerror} leads to
$$\|{}_{\eta,\beta,\gamma}\bm{\Psi}_{\ell,k}^h*(D^{\alpha}\mathbf{f})-D^{\alpha}\mathbf{g}\|_{p,\R^d}=\mathcal{O}(h^{2k}).$$
Moreover, noting that $\widehat{{}_{\eta,\beta,\gamma}\bm{\Psi}_{\ell,k}}$ satisfies Equation \eqref{fouriertrans2}, we  expand $\widehat{{}_{\eta,\beta,\gamma}\bm{\Psi}_{\ell,k}}(h\bm{\omega}+2\bold j\pi)$ around $2\bold j\pi$ via Taylor expansion up to order $2k$  to get
\begin{equation*}
\begin{split}
&(2\pi)^{-d/2}h^{-|\alpha|}\int_{\R^d}\sum_{\bold j\in \mathbb{Z}^d/\{0\}}|(h\bm{\omega}+2\bold j\pi)^{\alpha}\widehat{{}_{\eta,\beta,\gamma}\bm{\Psi}_{\ell,k}}(h\bm{\omega}+2\bold j\pi) \hat{\mathbf{f}}(\bm{\omega})|d\bm{\omega}\\
&=\frac{(2\pi)^{-d/2}h^{2k-|\alpha|}}{(2k)!}\int_{\R^d}\sum_{\bold j\in \mathbb{Z}^d/\{\bold 0\}}|(h\bm{\omega}+2\bold j\pi)^{\alpha}|\cdot|\widehat{{}_{\eta,\beta,\gamma}\bm{\Psi}_{\ell,k}}^{(2k)}(\bm{\xi}_\bold j+2\bold j\pi)|\cdot |\bm{\omega}^{2k} \hat{\mathbf{f}}(\bm{\omega})|d\bm{\omega}, \end{split}
 \end{equation*}
for any $\ \bm{\xi}_\bold j\in (2\bold j\pi, 2\bold j\pi+h\bm{\omega})$.
This together with Equation \eqref{fouriertrans} and the fact that $\widehat{\psi_{\ell,k}}\in \mathcal{M}^{2k}(\R^d)$ yields
$$\sum_{\bold j\in \mathbb{Z}^d/\{\bold 0\}}|(h\bm{\omega}+2\bold j\pi)^{\alpha}|\cdot|\widehat{{}_{\eta,\beta,\gamma}\bm{\Psi}_{\ell,k}}^{(2k)}(\bm{\xi}_{\bold j}+2\bold j\pi)|<\infty, \quad  0\leq |\alpha|< \min\{2k, r-2k,2\ell-d-1\}.$$ In addition, the conditions that  $f_s\in \mathcal{S}_p^r(\R^d)$, $s=1,2,\dots, d$, yield the inequality $$\int_{\R^d}|\bm{\omega}^{2k}\mathbf{f}(\bm{\omega})|d\bm{\omega}<\infty,\quad 2k\leq \min\{2\ell, r\},$$
which in turn gives
 $$h^{-|\alpha|}(2\pi)^{-d/2}\int_{\R^d}\sum_{\bold j\in \mathbb{Z}^d/\{\bold 0\}}|(h\bm{\omega}+2\bold j\pi)^{\alpha}|\cdot|\widehat{{}_{\eta,\beta,\gamma}\bm{\Psi}_{\ell,k}}(h\bm{\omega}+2\bold j\pi) \hat{\mathbf{f}}(\bm{\omega})|d\bm{\omega}\\
=\mathcal{O}(h^{2k-|\alpha|}).$$
 Therefore, by combining the above two parts, we have
$\|D^{\alpha}Q_h\mathbf{f}-D^{\alpha}\mathbf{g}\|_{p,\R^d}=\mathcal{O}(h^{2k-|\alpha|})$.
\end{proof}

\begin{remark}\label{numericalcomput}
The above theorem demonstrates that if the vector field $\bold f$ is divergence-free/curl-free/harmonic, then we can construct a matrix kernel ${}_{\eta,\beta,\gamma}\bm{\Psi}_{\ell,k}$ with the same property of  $\bold f$ (by appropriately choosing values of $\eta,\beta,\gamma$), such that  the resulting quasi-interpolant $Q_h\bold f$ approximates both the field as well as its derivatives. Otherwise,  the quasi-interpolant $Q_h\bold f$ with a divergence-free/curl-free matrix kernel   approximates corresponding  divergence-free/curl-free part (as well as derivatives) of the Helmholtz--Hodge decomposition of  $\bold f$.  
\end{remark}

We go further with showing  how to use the quasi-interpolation scheme \eqref{quasiinterpolationofourpaper} for computing the Helmholtz--Hodge decomposition of a vector field $\bold f$ defined on $\R^d$ and establishing  corresponding approximation orders. To this end, we first define two quasi-interpolants:
\begin{equation}\label{quasiinterpolatio_divfreeprojection}
Q^{div}_h\mathbf{f}(\bold x):=\sum_{\bold j\in \mathbb{Z}^d}\bm{\Psi}^{div}_{\ell,k}(\bold x/h-\bold j)\mathbf{f}(\bold jh),
\end{equation}
and
\begin{equation}\label{quasiinterpolatio_curlfreeprojection}
Q^{curl}_h\mathbf{f}(\bold x):=\sum_{\bold j\in \mathbb{Z}^d}\bm{\Psi}^{curl}_{\ell,k}(\bold x/h-\bold j)\mathbf{f}(\bold jh).
\end{equation}
Then we derive  their error estimates.
\begin{theorem}\label{errorestimateforderi2}
Let $\mathbf{f}$ be a vector field whose components $f_s\in \mathcal{S}_p^r(\R^d)$, $s=1,2,\dots, d$ and suppose $\mathbf{f}$ has the Helmoltz-Hodge decomposition $\mathbf{f} = \bold w + \nabla p$ where $\bold w $ is divergence-free and $\nabla p$ is curl-free.  Let $Q^{div}_h\bold f$  and $Q^{curl}_h\bold f$ be defined as above. Then, for $0\leq |\alpha|< \min\{2k, r-2k,2\ell-d-1\}$, we obtain the following simultaneous error estimates: 
\begin{enumerate}
    \item For the divergence-free projection $$\|D^{\alpha}Q^{div}_h\mathbf{f}-D^{\alpha}\mathbf{w}\|_{p,\R^d}=\mathcal{O}(h^{2k-|\alpha|}).$$
    \item For the curl-free projection $$\|D^{\alpha}Q^{curl}_h\mathbf{f}-D^{\alpha}\nabla p\|_{p,\R^d}=\mathcal{O}(h^{2k-|\alpha|}).$$
\end{enumerate}
\end{theorem}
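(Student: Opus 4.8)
The plan is to obtain both estimates as an immediate specialization of Theorem \ref{errorestimateforderi}, observing that $Q^{div}_h\mathbf{f}$ and $Q^{curl}_h\mathbf{f}$ are nothing but the general Schoenberg-form quasi-interpolant \eqref{quasiinterpolationofourpaper} built from particular members of the matrix-kernel family \eqref{divergencefreekernl}. First I would record the bookkeeping: $\bm{\Psi}^{div}_{\ell,k}={}_{1,1,0}\bm{\Psi}_{\ell,k}$ is a divergence-free kernel, so $Q^{div}_h\mathbf{f}=Q_h\mathbf{f}$ with $(\eta,\beta,\gamma)=(1,1,0)$; and each admissible $\bm{\Psi}^{curl}_{\ell,k}$ listed in \eqref{curlfreeprojectionkernel} is a curl-free kernel, so $Q^{curl}_h\mathbf{f}=Q_h\mathbf{f}$ for one of the four curl-free parameter triples. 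Thus each of the two quasi-interpolants falls under the hypothesis of Theorem \ref{errorestimateforderi} with a suitably chosen $\eta,\beta,\gamma$.

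Next I would identify the companion field $\bold g$ of Theorem \ref{errorestimateforderi} in each case. By the kernel-projection identities of Subsection 3.2, convolution with the divergence-free kernel annihilates the curl-free part and reproduces $\bold w$ (Equation \eqref{relationofsf:projection}), while convolution with a curl-free kernel annihilates the divergence-free part and reproduces $\nabla p$ (Equation \eqref{relationofsf2:projection}). Consequently, in the notation of Theorem \ref{errorestimateforderi} one has $\bold g=\bold w$ for $Q^{div}_h\mathbf{f}$ and $\bold g=\nabla p$ for $Q^{curl}_h\mathbf{f}$, and the theorem delivers exactly
$$\|D^{\alpha}Q^{div}_h\mathbf{f}-D^{\alpha}\mathbf{w}\|_{p,\R^d}=\mathcal{O}(h^{2k-|\alpha|}), \qquad \|D^{\alpha}Q^{curl}_h\mathbf{f}-D^{\alpha}\nabla p\|_{p,\R^d}=\mathcal{O}(h^{2k-|\alpha|})$$
on the stated range $0\leq|\alpha|<\min\{2k,\,r-2k,\,2\ell-d-1\}$, \emph{provided} the regularity hypotheses of Theorem \ref{errorestimateforderi} (and, behind it, Lemma \ref{projectionconvolutionerror}) are verified for these companion fields.

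The only genuine work, therefore, is the third step: checking that $f_s\in\mathcal{S}_p^r(\R^d)$ forces the components of $\bold w$ and $\nabla p$ into $\mathcal{S}_p^r(\R^d)$ as well. Here I would use that the Helmholtz projectors are Fourier multipliers with symbols $\bold I_d-\bm{\omega}\bm{\omega}^T/\|\bm{\omega}\|^2$ and $\bm{\omega}\bm{\omega}^T/\|\bm{\omega}\|^2$, which are homogeneous of degree zero, smooth away from the origin, and bounded; hence $\widehat{\bold w}(\bm{\omega})$ and $\widehat{\nabla p}(\bm{\omega})$ are pointwise dominated by $|\hat{\bold f}(\bm{\omega})|$, so the condition $\bm{\omega}^{\alpha}\hat{(\cdot)}(\bm{\omega})\in L^p(\R^d)$ for $|\alpha|=r$ is inherited, and likewise the Sobolev membership (for $p=2$ this is immediate via Plancherel; for general $p$ one invokes the $L^p$-boundedness of these smooth zero-homogeneous multipliers). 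With that transfer of regularity in place, Theorem \ref{errorestimateforderi} applies verbatim with $\bold g=\bold w$ and with $\bold g=\nabla p$, and the proof is complete. The main obstacle is thus purely the multiplier/regularity transfer for the Helmholtz decomposition; the approximation rate itself requires no new estimate beyond Theorem \ref{errorestimateforderi}.
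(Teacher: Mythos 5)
Your reduction is correct in substance, but it takes a slightly different route from the paper. The paper does not simply invoke Theorem \ref{errorestimateforderi}: it reruns the Fourier-splitting argument for the specific kernels, using \eqref{fouriertrans} to write $\widehat{\bm{\Psi}^{div}_{\ell,k}}(\bm{\omega})=\widehat{\psi_{\ell,k}}(\bm{\omega})\bigl[\bold I_d-\bm{\omega}\bm{\omega}^T/\|\bm{\omega}\|^2\bigr]$ and then the multiplier identity $\widehat{\bm{\Psi}^{div}_{\ell,k}}\,\hat{\mathbf{f}}=\widehat{\psi_{\ell,k}}\,\hat{\mathbf{w}}$ from \eqref{projectionoperators_Fouriertransform}, so that the $\bold j=\bold 0$ term is controlled by Lemma \ref{projectionconvolutionerror} and the aliasing sum over $\bold j\neq\bold 0$ by Lemma \ref{polyharmonicsplinepropert}; the curl-free case is handled analogously with ${}_{0,0,1}\bm{\Psi}_{\ell,k}$. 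Your version treats Theorem \ref{errorestimateforderi} as a black box (which, as stated, already admits $\bold g=\bold w$ or $\bold g=\nabla p$), which is more economical but essentially circular-free only because the real content --- the projection convolution estimate of Lemma \ref{projectionconvolutionerror} and the Strang--Fix aliasing bound --- is the same in both routes; what the paper's rederivation buys is an explicit display of \emph{why} the divergence-free kernel selects $\bold w$. Your third step is a genuine addition: Lemma \ref{projectionconvolutionerror} carries the hypothesis $\partial p/\partial x^s\in\mathcal{S}_p^r(\R^d)$, which Theorem \ref{errorestimateforderi2} does not assume, and the paper silently skips this verification; your zero-homogeneous-multiplier argument closes that gap, with the caveat that Mikhlin-type $L^p$-boundedness of $\bm{\omega}\bm{\omega}^T/\|\bm{\omega}\|^2$ holds for $1<p<\infty$ but not at the endpoints $p=1,\infty$ that the paper nominally allows, so you should either restrict $p$ or note that the decomposition \eqref{projectionoperators_Fouriertransform} is anyway only defined on $\bold L^2$.
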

\begin{proof} 
To prove part 1 of the theorem, we note that Equations \eqref{fouriertrans} gives  
 \begin{equation}\label{divfreerojection_fouriertransform}
 \widehat{\bm{\Psi}^{div}_{\ell,k}}(\bm{\omega})=\widehat{\psi_{\ell,k}}(\bm{\omega})\left[\bold I_d-\frac{\bm{\omega}\bm{\omega}^T}{\|\bm{\omega}\|^2}\right].
 \end{equation}
 By fixing a point $\bold x\in \R^d$, we can rewrite  $D^{\alpha}Q^{div}_h\mathbf{f}(\bold x)-D^{\alpha}\mathbf{w}(\bold x)$ using the Fourier transform as
\begin{equation*}
\begin{split}
&D^{\alpha}Q^{div}_h\mathbf{f}(\bold x)-D^{\alpha}\mathbf{w}(\bold x)\\
&=(-i)^{|\alpha|}(2\pi)^{-d/2}\int_{\R^d}\Bigg[h^{-|\alpha|}\sum_{\bold j\in \mathbb{Z}^d}(h\bm{\omega}+2\bold j\pi)^{\alpha}\widehat{\bm{\Psi}^{div}_{\ell,k}}(h\bm{\omega}+2\bold j\pi)\hat{\mathbf{f}}(\bm{\omega})e^{2i\pi \bold j\bold x/h}-\bm{\omega}^{\alpha}\bold I_d\hat{\mathbf{w}}(\bm{\omega})\Bigg]e^{i\bold x\bm{\omega}}d\bm{\omega}.
\end{split}
\end{equation*}
Furthermore,  based on Equation\eqref{divfreerojection_fouriertransform} and Equation \eqref{projectionoperators_Fouriertransform}, we have
\begin{equation*}
\widehat{\bm{\Psi}^{div}_{\ell,k}}(\bm{\omega})\hat{\mathbf{f}}(\bm{\omega}) = 
\widehat{\psi_{\ell,k}}(\bm{\omega})\left[\bold I_d-\frac{\bm{\omega}\bm{\omega}^T}{\|\bm{\omega}\|^2}\right]\hat{\mathbf{f}}(\bm{\omega}) = \widehat{\psi_{\ell,k}}(\bm{\omega})\bold I_d\hat{\mathbf{w}}(\bm{\omega}).
\end{equation*}
This in turn leads to 
\begin{equation*}
\begin{split}
&|D^{\alpha}Q_h^{div}\mathbf{f}(\bold x)-D^{\alpha}\mathbf{w}(\bold x)|\\
&\leq|\bm{\Psi}_{\ell,k}^{div,h}*(D^{\alpha}\mathbf{f})(\bold x)-D^{\alpha}\mathbf{w}(\bold x)|\\
&+h^{-|\alpha|}(2\pi)^{-d/2}\int_{\R^d}\sum_{\bold j\in \mathbb{Z}^d/\{\bold 0\}}|(h\bm{\omega}+2\bold j\pi)^{\alpha}\widehat{{\psi}_{\ell,k}}(h\bm{\omega}+2\bold j\pi) \bold I_d \hat{\mathbf{w}}(\bm{\omega})|d\bm{\omega}\\
&\leq \mathcal{O}(h^{2k})+\mathcal{O}(h^{2k-|\alpha|}),
\end{split}
\end{equation*}
according to  Lemma  \ref{projectionconvolutionerror} and Lemma \ref{polyharmonicsplinepropert}, respectively.
Therefore, we have 
 $\|D^{\alpha}Q^{div}_h\mathbf{f}-D^{\alpha}\mathbf{w}\|_{p,\R^d}=\mathcal{O}(h^{2k-|\alpha|})$.
 
We will now prove part 2 of the theorem. We take 
$\bm{\Psi}^{curl}_{\ell,k}(\bold x) =  {}_{0,0,1}\bm{\Psi}_{\ell,k}(\bold x)$ as an example.
Then Equation \eqref{fouriertrans} gives
 \begin{equation}\label{curlfreerojection_fouriertransform}
 \widehat{\bm{\Psi}^{curl}_{\ell,k}}(\bm{\omega})=\widehat{\psi_{\ell,k}}(\bm{\omega})\frac{\bm{\omega}\bm{\omega}^T}{\|\bm{\omega}\|^2}.
 \end{equation}
Once more, we fix a point $\bold x\in \R^d$ and then rewrite  $D^{\alpha}Q^{curl}_h\mathbf{f}(\bold x)-D^{\alpha}\nabla p(\bold x)$ using the Fourier transform as
\begin{equation*}
\begin{split}
&D^{\alpha}Q^{curl}_h\mathbf{f}(\bold x)-D^{\alpha}\nabla p(\bold x)\\
&=(-i)^{|\alpha|}(2\pi)^{-d/2}\int_{\R^d}\Bigg[h^{-|\alpha|}\sum_{\bold j\in \mathbb{Z}^d}(h\bm{\omega}+2\bold j\pi)^{\alpha}\widehat{\bm{\Psi}^{div}_{\ell,k}}(h\bm{\omega}+2\bold j\pi)\hat{\mathbf{f}}(\bm{\omega})e^{2i\pi \bold j\bold x/h}-\bm{\omega}^{\alpha}\bold I_d(\bm{\omega}\hat{p})(\bm{\omega})\Bigg]e^{i\bold x\bm{\omega}}d\bm{\omega}.
\end{split}
\end{equation*}
Using Equations \eqref{curlfreerojection_fouriertransform}, \eqref{projectionoperators_Fouriertransform}, and the properties of $\mathbf{f}$, we have
\begin{equation*}
\widehat{\bm{\Psi}^{div}_{\ell,k}}(\bm{\omega})\hat{\mathbf{f}}(\bm{\omega}) = 
\widehat{\psi_{\ell,k}}(\bm{\omega})\frac{\bm{\omega}\bm{\omega}^T}{\|\bm{\omega}\|^2}\hat{\mathbf{f}}(\bm{\omega}) = \widehat{\psi_{\ell,k}}(\bm{\omega})\bold I_d(\bm{\omega}\hat{p})(\bm{\omega}).
\end{equation*}
Similarly, based on  Lemma \ref{projectionconvolutionerror} and Lemma \ref{polyharmonicsplinepropert}, we can derive the error estimate
$\|D^{\alpha}Q^{curl}_h\mathbf{f}-D^{\alpha}\nabla p\|_{p,\R^d}=\mathcal{O}(h^{2k-|\alpha|})$.
\end{proof}

\subsubsection{\textit{Quasi-interpolation defined over a bounded domain $\Omega$}}\label{sec:divfreequasioncompactdomain}
In the available literature on quasi-interpolation there is a well-known boundary problem. Namely, quasi-interpolation provides poor approximations at the boundary if we apply it directly to the data sampled over a bounded domain. 
%There are  two commonly used approaches to circumvent the boundary problem, see \cite{Gaoetal}, Beatson and Powell \cite{BeatsonPowell}, Wu and Schaback \cite{Schabackandwu}, Chen and Suter \cite{ChenandSuter}, for instance. 
Recently, authors in Reference \cite{Gaoetal2} proposed an approach to mitigate this issue that uses the Boolean sum of interpolation (near the boundary) and quasi-interpolation (in the interior) of the bounded domain. The resulting approximation still performs well at the boundary without any extension (as in \cite{ChenandSuter}, for instance). Based on this success, we adopt the approach described by \cite{Gaoetal2} here and throughout the remainder of the paper.

 We now describe our method for quasi-interpolation defined over a bounded domain. Let $V$ be a bounded domain satisfying $V \subset \Omega$ and ${\rm dist}(\partial \Omega, \partial V) \ge c$ for a fixed $0<c<1$. Here ${\rm dist}(\partial \Omega, \partial V)$ denotes the Hausdorff distance between $\partial \Omega$ and $\partial V$. Let $\mathbb{A}$ be a finite subset of $\mathbb{Z}^d$ and $\mathbb{B}$ be a finite subset of $\mathbb{A}$. Let further $\{(\bold jh,\bold f(\bold jh)\}_{\bold j\in \mathbb{A}}$ be sampling data over the domain $\Omega$ with $\{(\bold jh,\bold f(\bold jh)\}_{\bold j\in \mathbb{B}}\subset \Omega/V$. Then, based on the data $\{(\bold jh,\bold f(\bold jh)\}_{\bold j\in \mathbb{B}}\subset \Omega/V$, we construct an \emph{interpolant} $I\bold f$  interpolating $\bold f$ at these sampling centers following any approach in \cite{fsw}, \cite{fw2}, \cite{fw}, such that it can be used for computing the Helmholtz--Hodge decomposition. For example, $I\bold f$ is the generalized interpolant of Equation (13) in \cite{fsw}.  
Furthermore, based on $I\bold f$ and $\bold f$, we  construct another vector-valued function $\bold g(\bold x):=\bold f(\bold x)-I\bold f(\bold x)$, $\bold x\in \Omega$. In contrast to the divergence-free quasi-interpolation scheme proposed in \cite{Gaoetal2}, $\bold g$ and $I\bold f$ are no longer divergence-free vector-valued functions. Thus, to compute the quasi-interpolant projection of $\bold f$ onto the space of divergence-free functions, we must also compute the divergence-free part of $I\bold f$, which we denote by $I^{div}\bold f$ (see the unnumbered equation after (20) in \cite{fsw}). Moreover,  we choose  $I\bold f$ appropriately such that $\bold g=(g_1,g_2,\dots, g_d)$ satisfies  $g_s\in \mathcal{S}_p^r(\R^d)$, $s=1,2,\dots, d$. We denote by $Q_{h,H}^{div}\bold g$  the divergence-free quasi-interpolation projection operator $Q_{h,H}^{div}$ applied directly to the sampling data $\{(\bold jh,\bold g(\bold jh)\}_{\bold j\in \mathbb{A}}$, that is, \begin{equation}\label{quasiinterpolationonboundeddomain}
Q_{h,H}^{div}\bold g(\bold x)=h^d\sum_{\bold j\in \mathbb{A}}\bm{\Psi}_{\ell,k}^{div,H}(\bold x-\bold jh)\bold g(\bold jh)
\end{equation} with $$\bm{\Psi}_{\ell,k}^{div,H}(\bold x)=H^{-d}\bm{\Psi}_{\ell,k}^{div}(\bold x/H).$$ Finally, by coupling  $I^{div}\bold f$ together with $Q^{div}_{h,H}\bold g$, we  construct an {\it ansatz} $IQ^{div}_{h,H}\bold f$  as
\begin{equation}\label{combined}
IQ^{div}_{h,H}\bold f(\bold x )=I^{div}\bold f(\bold x )+Q^{div}_{h,H}\bold g(\bold x ),\ \bold x \in \Omega.
\end{equation}
It is straightforward to verify that $IQ^{div}_{h,H}\bold f$ is a divergence-free vector-valued function, since both $I^{div}\bold f$ and $Q^{div}_{h,H}\bold g$ are divergence-free functions. We now derive the approximation errors to the Leray projection, $\mathcal{P}_L\bold f$, of function $\bold f$ and its derivatives.

In what follows we assume $1/2\leq \mu \leq p$. Then, based on Jensen's inequality, we have  \begin{equation*}
\begin{split}
\|D^{\alpha}IQ^{div}_{h,H}\bold f-D^{\alpha} \mathcal{P}_{L} \bold f\|_{\mu,\Omega}&=\left(\int_{\Omega}|D^{\alpha}IQ^{div}_{h,H}\bold f(\bold x)-D^{\alpha}\mathcal{P}_{L} \bold f(\bold x)|^\mu d\bold x\right)^{1/\mu}\\
&=\left(\int_{V}|D^{\alpha}IQ^{div}_{h,H}\bold f(\bold x)-D^{\alpha} \mathcal{P}_{L}  \bold f(\bold x)|^{\mu}d\bold x+\int_{\Omega/V}|D^{\alpha}IQ^{div}_{h,H}\bold f(\bold x)-D^{\alpha} \mathcal{P}_{L} \bold f(\bold x)|^\mu d\bold x\right)^{1/\mu}\\
&\leq \|D^{\alpha}IQ^{div}_{h,H}\bold f-D^{\alpha}\mathcal{P}_{L} \bold f\|_{\mu,V}+\|D^{\alpha}IQ^{div}_{h,H}\bold f-D^{\alpha} \mathcal{P}_{L} \bold f\|_{\mu,\Omega/V}.
\end{split}
 \end{equation*} We note that $\|\bold f\|_{\mu, V}=\sum_{s=1}^d\|f_s\|_{\mu, V}$ with $\|f_s\|_{\mu, V}=(\int_V|f_s(\bold x)|^{\mu}d\bold x)^{1/{\mu}}$ for $f_s\in \mathcal{S}_p^r(\R^d)$, $s=1,2,\dots d$.  Moreover, according to Equation \eqref{quasiinterpolationonboundeddomain} and Equation \eqref{combined}, we have
\begin{equation*}
\|D^{\alpha}IQ^{div}_{h,H}\bold f-D^{\alpha}\mathcal{P}_{L} \bold f\|_{\mu,V}=\|D^{\alpha}Q^{div}_{h,H}\bold g-D^{\alpha} (\mathcal{P}_{L} \bold f-I^{div}\bold f)\|_{\mu,V},
\end{equation*}
and
 \begin{equation*}
\|D^{\alpha}IQ^{div}_{h,H}\bold f-D^{\alpha}\mathcal{P}_{L} \bold f\|_{\mu,\Omega/V}=\|D^{\alpha}Q_{h,H}^{div}(\bold f-I\bold f)-D^{\alpha}(\mathcal{P}_{L} \bold f-I^{div}\bold f)\|_{\mu,\Omega/V}.
 \end{equation*}
Furthermore, observing that
\begin{equation*}
\begin{split}
&\|D^{\alpha}Q^{div}_{h,H}(\bold f-I\bold f)-D^{\alpha}(\mathcal{P}_{L}\bold f-I^{div}\bold f)\|_{\mu,\Omega/V}\\
&\leq \|D^{\alpha}Q^{div}_{h,H}(\bold f-I\bold f)\|_{\mu,\Omega/V}+\|D^{\alpha}(\mathcal{P}_{L}\bold f-I^{div}\bold f)\|_{\mu,\Omega/V}\\
&\leq H^{-|\alpha|-d}\|{}_{1,1,0}\bm{\Psi}_{\ell,k}\|_{\infty}\|\bold f-I\bold f\|_{\mu,\Omega/V}+\|D^{\alpha}\mathcal{P}_{L}\bold f-D^{\alpha}I^{div}\bold f\|_{\mu,\Omega/V},
 \end{split}
 \end{equation*}
 it suffices to derive bounds of the quasi-interpolation error $\|D^{\alpha}Q^{div}_{h,H}\bold g-D^{\alpha}(\mathcal{P}_{L} \bold f-I^{div}\bold f)\|_{\mu,V}$ and  the interpolation errors $\|\bold f-I\bold f\|_{\mu,\Omega/V}$ and $\|D^{\alpha}\mathcal{P}_{L}\bold f-D^{\alpha}I^{div}\bold f\|_{\mu,\Omega/V}$, where $0\leq|\alpha|\leq r$.  In addition, since radial basis function interpolation methods for the Helmholtz--Hodge decomposition have been proposed and studied in several papers, we adopt their results. In particular, according to Lemma 5.2 in \cite{fw}, for  the generalized interpolant $I\bold f$   we have (for $\bold f \in \bold H^p(\Omega/V)$)
 $$\|\bold f-I\bold f\|_{\mu,\Omega/V}  = \mathcal O(h^{p-\mu}), \quad 0\leq \mu \leq p. $$
 On the other hand, according to Theorem 5.5 in \cite{fw}, we have 
 $$\|\mathcal{P}_{L}\bold f-I^{div}\bold f\|_{\mu,\Omega/V}  = \mathcal O(h^{p-\mu}), \quad 1/2\leq \mu\leq p. $$
Next, we assume that the interpolant provides an approximation order of at least $n$ over the domain $\Omega/V$ for some positive integer $n$ satisfying $\max\{1,|\alpha|\}<n\leq p-\mu$. This in turn implies that $$\|D^{\alpha}(\mathcal{P}_L\bold f-I^{div}\bold f)\|_{\mu,\Omega/V}=\mathcal{O}(h^{n-|\alpha|}),$$ and consequently $$\|D^{\alpha}Q^{div}_{h,H}(\bold f-I\bold f)-D^{\alpha}(\mathcal{P}_L\bold f-I^{div}\bold f)\|_{\mu,\Omega/V}\leq \mathcal{O}(h^nH^{-|\alpha|-d})+\mathcal{O}(h^{n-|\alpha|}).$$

Setting $\bold g_L = \mathcal{P}_L\bold f-I^{div}\bold f$, it remains to derive a bound of  the quasi-interpolation error $\|D^{\alpha}Q^{div}_{h,H}\bold g-D^{\alpha}\bold g_L \|_{\mu,V}$.
 We adopt the technique proposed in \cite{Gaoetal} by viewing quasi-interpolation as a two-step procedure: first convolution and then discretization of the convolution. Thus, we first introduce two convolution operators $$\mathcal{C}^{div}_H(\bold g)(\bold x):=\int_{\R^d}\bm{\Psi}_{\ell,k}^{div,H}(\bold x-\bold t)\bold g(\bold t)d\bold t$$ and $$\mathcal{C}^{div}_{\Omega,H}(\bold g)(\bold x):=\int_{\Omega}\bm{\Psi}_{\ell,k}^{div,H}(\bold x-\bold t)\bold g(\bold t)d\bold t.$$ We remind readers that $\mathcal{C}^{\mu}_H$ is defined for $L^\mu$ functions on $\R^d$ and $\mathcal{C}^{div}_{\Omega,H}$ for those functions when restricted on $\Omega$. Moreover, using the Minkowski inequality, we have
\begin{equation}\label{Minkowski}
\|D^{\alpha}Q^{div}_{h,H}\bold g-D^{\alpha} \bold g_L \|_{\mu, V} \leq  \|D^{\alpha}\mathcal{C}^{div}_{\Omega,H}(\bold g)-D^{\alpha} \bold g_L \|_{\mu, V}+\|D^{\alpha}Q^{div}_{h,H}\bold g-D^{\alpha}\mathcal{C}^{div}_{\Omega,H}(\bold g)\|_{\mu,V}.\end{equation}
Our task now is to derive bounds of these two parts on the right-hand side of the above inequality.

\begin{lemma}\label{convolutionoveraboundeddomain}
 Let $V$ be a bounded domain satisfying $V \subset \Omega$ and ${\rm dist}(\partial \Omega, \partial V) \ge c$ for a fixed $0<c<1$. Here ${\rm dist}(\partial \Omega, \partial V)$ denotes the Hausdorff distance between $\partial \Omega$ and $\partial V$.   Let $\bold f$ be a vector field whose components $f_s\in \mathcal{S}_p^r(\R^d)$, $s=1,2,\dots d$. Let $I\bold f$ be the generalized interpolant of $\bold f$ and $I^{div}\bold f$ be the interpolant Leray projection such that $\bold g:=\bold f-I\bold f$ and $\bold g_L := \mathcal{P}_L \bold f-I^{div} \bold f$ are functions whose components are in $\mathcal{S}_p^r(\R^d)$. Assume further that the divergence-free interpolant $I^{div}\bold f$ provides an approximation order of at least $\mathcal{O}(h^n)$ in the domain $\Omega/V$, for some positive integer $n$ satisfying $\max\{1,|\alpha|\}<n\leq p-\mu$. Let $Q^{div}_{h,H}\bold g$ be defined as in Equation \eqref{quasiinterpolationonboundeddomain}. Then, for $1/2 \leq \mu \leq p$, we have the inequality
 \begin{equation}\label{convolutionerror}
 \|D^{\alpha}\mathcal{C}^{div}_{\Omega,H}(\bold g)-D^{\alpha}\bold g_L \|_{\mu, V} \leq \mathcal{O}(H^{2k-2|\alpha|-\varepsilon})+\mathcal{O}(h^n/H^{d+|\alpha|})
\end{equation}
holds true for any small positive $\varepsilon$ with $0<\varepsilon<1$ and any $0\leq |\alpha|\leq \min\{k, r-2k,2\ell-d-1\}$.
 \end{lemma}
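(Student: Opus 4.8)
The plan is to isolate, on the interior set $V$, the part of the error that is an ordinary quasi‑interpolation (reproduction) error from the part created by replacing $\bold g$ by its bounded‑domain Leray projection. First I would record that $\bold g_L=\mathcal{P}_L\bold f-I^{div}\bold f$ is exactly the Leray projection $\mathcal{P}_L\bold g$ of $\bold g$ over $\Omega$, hence (Lemma~\ref{lem:LerayProjection} and Remark~\ref{rem:boundaryconditions}) divergence‑free with $\bold g_L\cdot\bold n=0$ on $\partial\Omega$, and write the Helmholtz--Hodge decomposition of $\bold g$ over $\Omega$ as $\bold g=\bold g_L+\nabla\rho$ with $\nabla\rho=\mathcal{P}_L^{\perp}\bold g$. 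Then I would split
\begin{equation*}
D^{\alpha}\mathcal{C}^{div}_{\Omega,H}(\bold g)-D^{\alpha}\bold g_L=\bigl[D^{\alpha}\mathcal{C}^{div}_{\Omega,H}(\bold g_L)-D^{\alpha}\bold g_L\bigr]+D^{\alpha}\mathcal{C}^{div}_{\Omega,H}(\nabla\rho),
\end{equation*}
so that the first bracket should produce the $\mathcal{O}(H^{2k-2|\alpha|-\varepsilon})$ term and the second the $\mathcal{O}(h^{n}/H^{d+|\alpha|})$ term.

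For the first bracket, the point is that since $\bold g_L$ is divergence‑free with vanishing normal trace, its extension by zero to $\R^{d}$, call it $\widetilde{\bold g_L}$, is divergence‑free in the distributional sense, so $\mathcal{C}^{div}_{\Omega,H}(\bold g_L)=\mathcal{C}^{div}_{H}(\widetilde{\bold g_L})$ and the reproduction identity \eqref{newrepro} turns this into the scalar convolution $\bold I_{d}\psi_{\ell,k}^{H}*\widetilde{\bold g_L}$. Because every point of $V$ is at Hausdorff distance at least $c$ from $\partial\Omega$, and because $\bm{\Psi}_{\ell,k}^{div}={}_{1,1,0}\bm{\Psi}_{\ell,k}$ and $\psi_{\ell,k}$ inherit the decay \eqref{decayofderivatives} from Lemma~\ref{polyharmonicsplinepropert}, the non‑smoothness of $\widetilde{\bold g_L}$ across $\partial\Omega$ reaches $V$ only through the tails of the (differentiated, scaled) kernel, which are bounded by $|D^{\alpha}\bm{\Psi}_{\ell,k}^{div,H}(\bold x-\bold t)|\le CH^{2k-2|\alpha|}\|\bold x-\bold t\|^{-d-2k+|\alpha|}$ for $\|\bold x-\bold t\|\ge c$; with Hölder's inequality and $\bold g_L\in\bold L^{p}(\Omega)$ this yields $\mathcal{O}(H^{2k-2|\alpha|})$. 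On the complementary piece, where $\widetilde{\bold g_L}$ agrees with the smooth $\bold g_L$, I would run the standard quasi‑interpolation estimate as in Lemma~\ref{convolution} and \cite{Gaoetal2}: split the convolution integral at radius $c$, Taylor expand against the vanishing moments \eqref{Strangfix} on the inner ball, and use the decay on the outer region; every contribution is $\mathcal{O}(H^{2k})$ except the Taylor remainder, whose weighted integral $\int_{\|\bold z\|\le c/H}|\psi_{\ell,k}(\bold z)|\,\|\bold z\|^{2k}\,d\bold z=\mathcal{O}(\log(1/H))$ is absorbed by lowering the exponent to $2k-\varepsilon$. Altogether the first bracket is $\mathcal{O}(H^{2k-2|\alpha|-\varepsilon})$.

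For the second bracket I would exploit that $\bm{\Psi}_{\ell,k}^{div}$ is row‑wise divergence‑free, so that over $\R^{d}$ its convolution with the gradient $\nabla\rho$ vanishes identically; integrating by parts over $\Omega$ therefore collapses $D^{\alpha}\mathcal{C}^{div}_{\Omega,H}(\nabla\rho)(\bold x)$ to a single boundary integral over $\partial\Omega$ against (tangential derivatives of) the trace of $\rho$, which is unchanged when a constant is added to $\rho$ because $\int_{\partial\Omega}\bm{\Psi}_{\ell,k}^{div,H}(\bold x-\bold t)\bold n\,d\sigma=0$. Normalizing $\rho$ to have mean zero on $\partial\Omega$, a trace‑plus‑Poincaré argument bounds $\|\rho\|_{L^{1}(\partial\Omega)}$ by $\|\nabla\rho\|_{L^{\mu}(\partial\Omega)}=\|\mathcal{P}_L^{\perp}\bold g\|_{L^{\mu}(\partial\Omega)}$, and this is $\mathcal{O}(h^{n})$ because near $\partial\Omega$ the interpolant reproduces the data: both $\bold g=\bold f-I\bold f$ and $\bold g_L=\mathcal{P}_L\bold f-I^{div}\bold f$ are $\mathcal{O}(h^{n})$ on $\Omega/V$ by Lemma~5.2 and Theorem~5.5 of \cite{fw} and the assumed approximation order $n$ of $I^{div}\bold f$, so that $\nabla\rho=\bold g-\bold g_L$ is $\mathcal{O}(h^{n})$ there. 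Bounding the remaining boundary integral with the uniform estimate $\|D^{\alpha}\bm{\Psi}_{\ell,k}^{div,H}\|_{\infty}\le H^{-d-|\alpha|}\|D^{\alpha}\,{}_{1,1,0}\bm{\Psi}_{\ell,k}\|_{\infty}$ gives $\mathcal{O}(h^{n}/H^{d+|\alpha|})$, and adding the two brackets proves \eqref{convolutionerror}.

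The main obstacle, as I see it, is that one cannot simply invoke Lemma~\ref{projectionconvolutionerror}: the \emph{nonlocal} Leray projection of Lemma~\ref{lem:LerayProjection} does not commute with restriction to $V$, and the whole‑space Helmholtz projection of an extension of $\bold g$ generically differs from $\mathcal{P}_L\bold g$ in the interior by a harmonic gradient that does \emph{not} tend to zero. Performing the decomposition $\bold g=\bold g_L+\nabla\rho$ \emph{over $\Omega$} — and exploiting that the vanishing normal trace of $\bold g_L$ makes its zero‑extension admissible in \eqref{newrepro} — is what repairs this. Making the attendant integration by parts rigorous is where the smoothness restriction $|\alpha|\le 2\ell-d-1$ on the kernel is needed, and converting the near‑boundary smallness of $\bold g$ and $\bold g_L$ into an $L^{1}(\partial\Omega)$ bound for $\rho$ through the trace theorem is the technically delicate step.
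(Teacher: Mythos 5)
Your proposal is correct in substance but follows a genuinely different route from the paper. The paper splits the error as $\bigl(\mathcal{C}^{div}_{\Omega,H}(\bold g)-\mathcal{C}^{div}_{H}(\bold g)\bigr)+\bigl(\mathcal{C}^{div}_{H}(\bold g)-\bold g_L\bigr)$: the second difference is dispatched by invoking Lemma \ref{projectionconvolutionerror} on all of $\R^d$ (tacitly identifying the whole-space divergence-free projection of $\bold g$ with $\bold g_L=\mathcal{P}_L\bold f-I^{div}\bold f$), while the first is a domain-truncation error, handled by integrating by parts over $\Omega$ --- which produces boundary terms in $\mathcal{P}_L\bold g$ and $\mathcal{P}_L^{\perp}\bold g$ of size $\mathcal{O}(h^n/H^{d+|\alpha|})$ --- and then estimating the leftover tail integral over $\R^d\setminus\Omega$ using the decay \eqref{decayofderivatives} and ${\rm dist}(V,\partial\Omega)\ge c$ to get $\mathcal{O}(H^{2k-2|\alpha|-\varepsilon})$. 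You instead decompose the \emph{function}, $\bold g=\bold g_L+\nabla\rho$ over $\Omega$: you treat $\mathcal{C}^{div}_{\Omega,H}(\bold g_L)-\bold g_L$ via the zero extension (legitimate, since the reproduction identity \eqref{newrepro} only needs $\bm{\omega}^T\widehat{\widetilde{\bold g_L}}=0$, which the vanishing normal trace guarantees) together with a local Taylor/moment argument, and you reduce $\mathcal{C}^{div}_{\Omega,H}(\nabla\rho)$ to a single boundary integral against the trace of $\rho$ itself (not its tangential derivatives --- a harmless verbal slip, since your Poincar\'e step treats it correctly as $\rho$). Both routes rely on the same ingredients (kernel decay, integration by parts, near-boundary accuracy of the interpolant) and produce the same two error terms. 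What your route buys is precisely the point you flag at the end: it never needs to equate the bounded-domain Leray projection $\mathcal{P}_L\bold g$ with the whole-space projection of an extension of $\bold g$, a step the paper's opening line takes implicitly and which is delicate because the two projections generally differ by a harmonic gradient. The price is that you must redo the $\mathcal{O}(H^{2k-\varepsilon})$ convolution estimate by hand for the non-smooth extension $\widetilde{\bold g_L}$ rather than citing Lemma \ref{projectionconvolutionerror}, and your trace-plus-Poincar\'e bound on $\rho$ requires $\partial\Omega$ connected and a pointwise (or trace-norm) reading of the $\mathcal{O}(h^n)$ interpolation error on $\partial\Omega$ --- though the latter is no stronger than what the paper itself assumes when it evaluates its own boundary terms pointwise.
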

 \begin{proof}
 Since $$D^{\alpha}\mathcal{C}^{div}_{H}(\bold g)=\mathcal{C}^{div}_{H}(D^{\alpha}\bold g),$$ we have the equality $$\left\|D^{\alpha}\bold g_L -D^{\alpha}\mathcal{C}^{div}_{H}(\bold g)\right\|_{\mu, V}=\mathcal{O}(H^{2k})$$ holds true for any $0\leq |\alpha|\leq \min\{2k, r-2k\}$ according to Lemma \ref{projectionconvolutionerror}. Thus it remains to derive the  bound for $\left\|D^{\alpha}\mathcal{C}^{div}_{H}(\bold g)-D^{\alpha}\mathcal{C}^{div}_{\Omega,H}(\bold g) \right\|_{\mu, V}$.

Using integration by parts, we have
\begin{equation*}
\begin{split}
D^{\alpha}\mathcal{C}^{div}_{\Omega,H}( \bold g)(\bold x)&=\int_{\Omega}(-1)^{\ell}q_{d,\ell, k}(\tilde{\Delta})\Delta\bold I_dD^{\alpha}\phi_{\ell+1}^H(\bold x-\bold t) \mathcal{P}_L\bold g(\bold t)d\bold t\\
&+[(-1)^{\ell}q_{d,\ell, k}(\tilde{\Delta})\nabla D^{\alpha}\phi_{\ell+1}^H(\bold x-\bold t)(\mathcal{P}_L\bold g(\bold t)\cdot \bold 1)]_{\bold t \in \partial \Omega}\\
&+[(-1)^{\ell}q_{d,\ell, k}(\tilde{\Delta})\nabla D^{\alpha}\phi_{\ell+1}^H(\bold x-\bold t)\times(\bold 1 \times \mathcal{P}^\perp_L\bold g(\bold t))]_{\bold t \in \partial \Omega}
\end{split}
\end{equation*} for any sufficiently smooth vector field $\bold g$ with Helmholtz--Hodge decomposition $\bold g = \mathcal{P}_L\bold g + \mathcal{P}^\perp_L\bold g$. Here we remind the readers that the differential operator is taken with respect to the $\bold x$ variable. Furthermore, since $I\bold f$ provides an approximation of at least order $\mathcal{O}(h^n)$ in the domain $\Omega/V$,  we have
$$\left|[(-1)^{\ell}q_{d,\ell, k}(\tilde{\Delta})\nabla D^{\alpha}\phi_{\ell+1}^H(\bold x-\bold t)(\mathcal{P}_L\bold g(\bold t)\cdot \bold 1)]_{\bold t \in \partial \Omega}\right|= \mathcal{O}(h^n/H^{d+|\alpha|})$$
and 
$$\left|[(-1)^{\ell}q_{d,\ell, k}(\tilde{\Delta})\nabla D^{\alpha}\phi_{\ell+1}^H(\bold x-\bold t)\times(\bold 1 \times \mathcal{P}^\perp_L\bold g(\bold t))]_{\bold t \in \partial \Omega}\right|= \mathcal{O}(h^n/H^{d+|\alpha|}).$$

Consequently, we have
$$D^{\alpha}\mathcal{C}^{div}_{\Omega,H}( \bold g)(\bold x)=\int_{\Omega}(-1)^{\ell}q_{d,\ell,k}(\tilde{\Delta})\Delta\bold I_dD^{\alpha}\phi_{\ell+1}^H(\bold x-\bold t)\mathcal{P}_L\bold g(\bold t)d\bold t+\mathcal{O}(h^n/H^{d+|\alpha|}).$$

 Therefore, fixing an $\bold x\in V$, we have  \begin{equation*}
  \begin{split}
 & \left|D^{\alpha}\mathcal{C}^{div}_{H}(\bold g)(\bold x)-D^{\alpha}\mathcal{C}^{div}_{\Omega,H}(\bold g)(\bold x) \right|\\
  &\leq \left|\int_{\R^d \setminus \Omega}\bold I_dD^{\alpha}\mathbf{\psi}_{\ell,k}^H(\bold x-\bold t)\mathcal{P}_L\bold g(\bold t)d\bold t\right|+\mathcal{O}(h^n/H^{d+|\alpha|})\\
  &\leq H^{-|\alpha|}\left|\int_{|\bold y|\geq c/H}\bold I_d\mathbf{\psi}_{\ell,k}^{(\alpha)}(\bold y)\mathcal{P}_L\bold g(\bold x-H\bold y)d\bold y\right|+\mathcal{O}(h^n/H^{d+|\alpha|})\\
  &\leq H^{-|\alpha|}\|\mathcal{P}_L\bold g\|_{\infty}\int_{|\bold y|\geq c/H}|\bold y|^{-2k+|\alpha|+\varepsilon}|\bold y|^{2k-|\alpha|-\varepsilon} |\bold I_d\mathbf{\psi}_{\ell,k}^{(\alpha)}(\bold y)|d\bold y+\mathcal{O}(h^n/H^{d+|\alpha|})\\
  &\leq H^{2k-2|\alpha|-\varepsilon}c^{-2k+|\alpha|+\varepsilon}\|\mathcal{P}_L\bold g\|_{\infty}\int_{|\bold y|\geq c/H}|\bold y|^{2k-|\alpha|-\varepsilon} |\bold I_d\mathbf{\psi}_{\ell,k}^{(\alpha)}(\bold y)|d\bold y+\mathcal{O}(h^n/H^{d+|\alpha|})\\
  &\leq CH^{2k-2|\alpha|-\varepsilon}\int_{\R^d}|\bold y|^{2k-|\alpha|-\varepsilon} |\bold I_d\mathbf{\psi}_{\ell,k}^{(\alpha)}(\bold y)|d\bold y+\mathcal{O}(h^n/H^{d+|\alpha|}).
  \end{split}
  \end{equation*}
  Moreover, based on Equality \eqref{decayofderivatives} we have $\int_{\R^d}|\bold y|^{2k-|\alpha|-\varepsilon} |\bold I_d\mathbf{\psi}_{\ell,k}^{(\alpha)}(\bold y)|d\bold y<+\infty$, which in turn yields
  $$\left\|D^{\alpha}\mathcal{C}_{H}(\bold g)-D^{\alpha}\mathcal{C}_{\Omega,H}(\bold g) \right\|_{p,V}\leq \mathcal{O}(H^{2k-2|\alpha|-\varepsilon})+\mathcal{O}(h^n/H^{d+|\alpha|})$$ for any $0\leq |\alpha|\leq \min\{k, r-2k,2\ell-d-1\}$.
Thus the lemma holds.
 \end{proof}
In addition, since $Q^{div}_{h,H}\bold g$ is a discretization of the integral $\mathcal{C}^{div}_{\Omega,H}( \bold g)$  using the rectangular rule with a step $h$, we have
 \begin{equation}\label{discretizationerror}
 |D^{\alpha}Q^{div}_{h,H}\bold g(\bold x)-D^{\alpha}\mathcal{C}^{div}_{\Omega,H}( \bold g)(\bold x)|=\mathcal{O}(hH^{-d-|\alpha|}).
 \end{equation}
This entails the following lemma.
 \begin{lemma}\label{errorestimateonboundeddomain}
  Let $V$,  $I\bold f$,  $I^{div}\bold f$, $\bold g$, $\bold g_L$,  and $Q^{div}_{h,H}\bold g$ satisfy the conditions in Lemma \ref{convolutionoveraboundeddomain}.  Then, for any vector field $\mathbf{f}$  whose components satisfy $f_s\in \mathcal{S}_p^r(\R^d)$, $s=1,2,\dots d$, and for $1/2 \leq \mu \leq p$, we have the inequality
 \begin{equation}\label{erroronboundednew}
 \|D^{\alpha}Q^{div}_{h,H}\bold g-D^{\alpha}\bold g_L \|_{\mu, V} \leq \mathcal{O}(H^{2k-2|\alpha|-\varepsilon})+\mathcal{O}(hH^{-d-|\alpha|})
\end{equation}
holds true for any small positive $\varepsilon$ with $0<\varepsilon<1$ and any $0\leq |\alpha|< \min\{k, r-2k,2\ell-d-1\}$.
 \end{lemma}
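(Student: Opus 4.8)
The plan is to obtain \eqref{erroronboundednew} by assembling the pieces that are already in place, with no new analytic work. The starting point is the splitting \eqref{Minkowski}, which bounds $\|D^{\alpha}Q^{div}_{h,H}\bold g-D^{\alpha} \bold g_L \|_{\mu, V}$ by the sum of a ``convolution versus target'' term $\|D^{\alpha}\mathcal{C}^{div}_{\Omega,H}(\bold g)-D^{\alpha} \bold g_L \|_{\mu, V}$ and a ``discretization'' term $\|D^{\alpha}Q^{div}_{h,H}\bold g-D^{\alpha}\mathcal{C}^{div}_{\Omega,H}(\bold g)\|_{\mu,V}$. Since the objects $V$, $I\bold f$, $I^{div}\bold f$, $\bold g$, $\bold g_L$, $Q^{div}_{h,H}\bold g$ here are exactly those of Lemma \ref{convolutionoveraboundeddomain}, I would apply that lemma directly to the first term, getting the bound $\mathcal{O}(H^{2k-2|\alpha|-\varepsilon})+\mathcal{O}(h^{n}H^{-d-|\alpha|})$ valid for $0\le|\alpha|\le\min\{k,r-2k,2\ell-d-1\}$.

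For the discretization term I would start from the pointwise rectangular-rule estimate \eqref{discretizationerror}, namely $|D^{\alpha}Q^{div}_{h,H}\bold g(\bold x)-D^{\alpha}\mathcal{C}^{div}_{\Omega,H}(\bold g)(\bold x)|=\mathcal{O}(hH^{-d-|\alpha|})$, and integrate it over $V$. Because $V$ is bounded (hence of finite Lebesgue measure) and the implied constant is uniform in $\bold x\in V$, raising to the power $\mu$, integrating, and taking the $\mu$-th root preserves the rate, so that $\|D^{\alpha}Q^{div}_{h,H}\bold g-D^{\alpha}\mathcal{C}^{div}_{\Omega,H}(\bold g)\|_{\mu,V}=\mathcal{O}(hH^{-d-|\alpha|})$.

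Adding the two contributions then gives the claim after one simplification: since $n$ is an integer with $\max\{1,|\alpha|\}<n$, so $n\ge 2$, and $0<h<1$, one has $h^{n}\le h$, so the term $\mathcal{O}(h^{n}H^{-d-|\alpha|})$ produced by Lemma \ref{convolutionoveraboundeddomain} is absorbed into $\mathcal{O}(hH^{-d-|\alpha|})$, leaving $\mathcal{O}(H^{2k-2|\alpha|-\varepsilon})+\mathcal{O}(hH^{-d-|\alpha|})$. The exponent range $0\le|\alpha|<\min\{k,r-2k,2\ell-d-1\}$ in the statement is admissible because it is contained in the ranges on which both Lemma \ref{convolutionoveraboundeddomain} and \eqref{discretizationerror} hold, and the freedom to choose any $0<\varepsilon<1$ is inherited from Lemma \ref{convolutionoveraboundeddomain}.

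The delicate work — comparing the convolution over $\Omega$ with the convolution over $\R^d$ through the boundary integration-by-parts terms, and balancing the two scales $h$ and $H$ — has already been carried out in Lemma \ref{convolutionoveraboundeddomain}, so the only step here that deserves attention is the passage from the pointwise bound \eqref{discretizationerror} to the $L^{\mu}(V)$-norm, i.e.\ checking that the hidden constant in \eqref{discretizationerror} does not degenerate as $\bold x\to\partial V$. This is secured by the separation hypothesis ${\rm dist}(\partial\Omega,\partial V)\ge c>0$, which keeps the scaled kernel $\bm{\Psi}^{div,H}_{\ell,k}$ and its relevant derivatives away from their singularity and within the decay regime \eqref{decayofderivatives}, together with the boundedness of $\bold g$ and $\mathcal{P}_L\bold g$ (a consequence of the Sobolev embedding, since $r>d/2$).
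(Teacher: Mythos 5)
Your proposal is correct and follows essentially the same route as the paper, whose proof is the one-line observation that the claim follows from the Minkowski splitting \eqref{Minkowski}, Lemma \ref{convolutionoveraboundeddomain} for the convolution term, and the pointwise discretization estimate \eqref{discretizationerror} for the quadrature term. Your additional remarks — absorbing $\mathcal{O}(h^{n}H^{-d-|\alpha|})$ into $\mathcal{O}(hH^{-d-|\alpha|})$ since $n\ge 2$, and checking uniformity of the constant in \eqref{discretizationerror} over the bounded set $V$ — are exactly the details the paper leaves implicit.
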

\begin{proof}
 Inequality \eqref{erroronboundednew}  follows directly from Inequalities \eqref{Minkowski}- \eqref{discretizationerror}.
\end{proof}

The main result of this section is obtained by collecting the above results into a theorem.
\begin{theorem}\label{fianlerrorestimateonboundeddomain}
Let $V$, $\bold f$, $I\bold f$, $I^{div}\bold f$, and $Q^{div}_{h,H}\bold g$  satisfy the conditions in Lemma \ref{convolutionoveraboundeddomain}, and suppose $\mathbf{f}$ has the Helmholtz--Hodge decomposition $\mathbf{f} = \mathbf{w} + \nabla p$. Then we have
 \begin{equation}\label{erroronboundedop}
 \|D^{\alpha}IQ^{div}_{h,H}\bold f-D^{\alpha}\bold w\|_{\mu,\Omega}\leq  \mathcal{O}(H^{2k-2|\alpha|-\varepsilon})+\mathcal{O}(hH^{-d-|\alpha|}).
\end{equation}
 In particular, we can determine an optimal choice of the scale parameter $H$ to be $$H=\mathcal{O}(h^{1/(2k+d-|\alpha|-\varepsilon)}),$$ such that we have an optimal approximation error estimate $$\|D^{\alpha}IQ^{div}_{h,H}\bold f-D^{\alpha}\bold w\|_{\mu,\Omega}\leq \mathcal{O}(h^{(2k-2|\alpha|-\varepsilon)/(2k+d-|\alpha|-\varepsilon)}).$$
 \end{theorem}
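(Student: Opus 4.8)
The plan is to assemble ingredients that have already been prepared, since the analytic heavy lifting is distributed across Lemmas~\ref{projectionconvolutionerror}, \ref{convolutionoveraboundeddomain} and~\ref{errorestimateonboundeddomain}. First I would note that the hypotheses of Lemma~\ref{convolutionoveraboundeddomain} build in $\bold g_L = \mathcal{P}_L\bold f - I^{div}\bold f$, i.e.\ we are in the case $g=0$ of Lemma~\ref{lem:LerayProjection}, so the divergence-free part $\bold w$ of the Helmholtz--Hodge decomposition of $\bold f$ coincides with the Leray projection $\mathcal{P}_L\bold f$; in particular $D^{\alpha}\bold w = D^{\alpha}\mathcal{P}_L\bold f$ throughout. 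Then I would start from the splitting already displayed just before Lemma~\ref{convolutionoveraboundeddomain},
\begin{equation*}
\begin{split}
\|D^{\alpha}IQ^{div}_{h,H}\bold f - D^{\alpha}\bold w\|_{\mu,\Omega} &\le \|D^{\alpha}Q^{div}_{h,H}\bold g - D^{\alpha}\bold g_L\|_{\mu,V}\\
&\quad + \|D^{\alpha}Q^{div}_{h,H}(\bold f - I\bold f) - D^{\alpha}(\mathcal{P}_L\bold f - I^{div}\bold f)\|_{\mu,\Omega/V},
\end{split}
\end{equation*}
with $\bold g = \bold f - I\bold f$, so that it suffices to bound the two terms on the right.

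For the first (interior) term I would quote Lemma~\ref{errorestimateonboundeddomain} verbatim, which gives $\|D^{\alpha}Q^{div}_{h,H}\bold g - D^{\alpha}\bold g_L\|_{\mu,V} \le \mathcal{O}(H^{2k-2|\alpha|-\varepsilon}) + \mathcal{O}(hH^{-d-|\alpha|})$ for $0 \le |\alpha| < \min\{k, r-2k, 2\ell-d-1\}$. For the second (boundary-layer) term I would invoke the estimate already assembled in the paragraph preceding Lemma~\ref{convolutionoveraboundeddomain}: controlling $Q^{div}_{h,H}$ crudely on $\Omega/V$ by $\|{}_{1,1,0}\bm{\Psi}_{\ell,k}\|_{\infty}H^{-|\alpha|-d}$ times the interpolation error $\|\bold f - I\bold f\|_{\mu,\Omega/V}$ and using the RBF interpolation bounds of~\cite{fw} (Lemma~5.2 and Theorem~5.5, together with their $|\alpha|$-th derivative versions) for a boundary interpolant of approximation order at least $n$, one obtains $\mathcal{O}(h^nH^{-|\alpha|-d}) + \mathcal{O}(h^{n-|\alpha|})$. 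Since $n$ is a positive integer with $n > \max\{1,|\alpha|\}$ and $0 < h \le H \le 1$, one has $h^nH^{-|\alpha|-d} \le hH^{-|\alpha|-d}$ and, once the interpolant is taken of order $n \ge 2k-|\alpha|$, also $h^{n-|\alpha|} \le H^{\,n-|\alpha|} \le H^{2k-2|\alpha|-\varepsilon}$; hence the boundary-layer contribution is absorbed into the interior one and~\eqref{erroronboundedop} follows.

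It remains to optimize $H$. The two surviving terms $H^{2k-2|\alpha|-\varepsilon}$ and $hH^{-d-|\alpha|}$ are balanced when $H^{2k-2|\alpha|-\varepsilon} = hH^{-d-|\alpha|}$, i.e.\ $H^{2k+d-|\alpha|-\varepsilon} = h$, so $H = \mathcal{O}(h^{1/(2k+d-|\alpha|-\varepsilon)})$; substituting this value of $H$ back into either term yields the claimed rate $\mathcal{O}(h^{(2k-2|\alpha|-\varepsilon)/(2k+d-|\alpha|-\varepsilon)})$. I expect the only genuinely delicate point to be the claim that the $\Omega/V$ contributions are dominated: it relies on the implicit standing assumption that $h$ is the fine scale and $H$ the coarse one ($h \le H \le 1$) together with the boundary interpolant having a sufficiently high approximation order $n$. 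This is legitimate because the RBF interpolants of~\cite{fw}, \cite{fsw} achieve arbitrarily high order for smooth $\bold f$, but it ought to be recorded as an explicit hypothesis rather than left tacit.
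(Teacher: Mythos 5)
Your proof is correct and follows essentially the same route as the paper, which simply collects the Jensen/Minkowski splitting over $V$ and $\Omega/V$, Lemma \ref{errorestimateonboundeddomain}, and the interpolation bounds from \cite{fw}, then balances the two surviving terms in $H$. Your explicit justification that the boundary-layer contributions $\mathcal{O}(h^nH^{-|\alpha|-d})+\mathcal{O}(h^{n-|\alpha|})$ are absorbed into the stated bound (using $n>\max\{1,|\alpha|\}$, $h\le H\le 1$, and a sufficiently high interpolation order $n$) is a point the paper leaves tacit, and recording it as a hypothesis is a reasonable refinement rather than a deviation.
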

 \begin{remark}
 For brevity and ease of presentation, the above discussion has focused on computing the divergence-free component of the Helmholtz-Hodge decomposition. However, it should be noted that similar results can be derived for the curl-free component as well using the ansatz 
 \begin{equation}\label{combined2}
IQ^{curl}_{h,H}\bold f(\bold x )=I^{curl}\bold f(\bold x )+Q^{curl}_{h,H}\bold g(\bold x ),\ \bold x \in \Omega,
\end{equation}
where $I^{curl}\bold f(\bold x )$ and $Q^{curl}_{h,H}\bold g(\bold x )$ are the curl-free counterparts of $I\bold f(\bold x )$ and  $Q_{h,H}\bold g(\bold x )$, respectively.
\end{remark}
 
 Up to now, we have constructed quasi-interpolation schemes for the Helmholtz--Hodge decomposition and derived  corresponding  convergence analysis for vector fields defined both in the whole space and over a bounded domain, respectively. In the next section, we shall provide some simulations.

\section{\textit{Numerical Simulations}}
In this section we will provide numerical confirmation of the above theoretical results. In particular, we will simulate functions and derivative approximations on $\R^d$ as well as compute the quasi-interpolant Leray projection of a vector field on the bounded domain $\Omega = [0,1]\times[0,1].$
As noted in \cite{Gaoetal2}, the construction of the kernel using the fundamental solution \eqref{fundamentalsolution} will require the use of a continuous extension at the origin (and possibly other points in the domain). To avoid a repetitive discussion of this issue, we assume the continuous extension is handled appropriately throughout the numerical experiments.

\subsection{Approximations on $\R^d$}
In order to simulate approximations on $\R^d$ we set $d = 2$, for simplicity, and we construct a vector field whose components are known to be divergence-free and curl-free. In particular, we choose 
\[
\frac{\partial^{\alpha} {\bf f }}{\partial x_1^\alpha} = 
\frac{\partial^{\alpha} }{\partial x_1^\alpha} \left( \bold d(\bold x) + \bold c(\bold x)\right)
 \] 
with divergence-free component 
 \[
\bold d(\bold x) = 
\begin{bmatrix}
    \sin(2\pi x_2) \sin^2(\pi x_1),
   -\sin(2\pi x_1) \sin^2(\pi x_2)\end{bmatrix}^T,
 \] 
and  curl-free component 
  \[
\bold c(\bold x ) = 
 \begin{bmatrix}
    \pi \sin(\pi x) \sin(\pi y),
   -\pi \cos(\pi x) \cos(\pi y)\end{bmatrix}^T,
 \] 
 for $\alpha = 0,1$.
The function $\bold f$ is sampled over a large domain $[0,12]\times[0,12]$ using the grid spacings $h = 12/(18+6i)$ for $i = 0,\dots, 14$ and we compute the error on a fine uniform mesh of $100\times100$ points in the interior evaluation domain $[5.5,6.5]\times[5.5,6.5]$. We run the code using the kernels corresponding to a fixed $\ell = k = 2$, i.e., $\boldsymbol{\Psi}^{div}_{2,2}(\bold x)$  and 
$\boldsymbol{\Psi}^{curl}_{2,2}(\bold x)$, 
 and we measure the root mean squared error of the approximation with respect to the divergence-free component
\[ error_{div} = \frac{\partial^{\alpha} }{\partial x_1^\alpha}Q^{div}_h \bold f - \frac{\partial^{\alpha} {\bf d }}{\partial x_1^\alpha},\]
the curl-free component
\[ error_{curl} =\frac{\partial^{\alpha} }{\partial x_1^\alpha}Q^{curl}_h \bold f - \frac{\partial^{\alpha} {\bf c }}{\partial x_1^\alpha},\]
and the full vector field
\[ error_{full} =\left( \frac{\partial^{\alpha} }{\partial x_1^\alpha}Q^{div}_h \bold f + \frac{\partial^{\alpha} }{\partial x_1^\alpha}Q^{curl}_h \bold f\right) - \frac{\partial^{\alpha} {\bf f }}{\partial x_1^\alpha}.\]
\begin{figure}
  \centering
{\includegraphics[scale=.43] {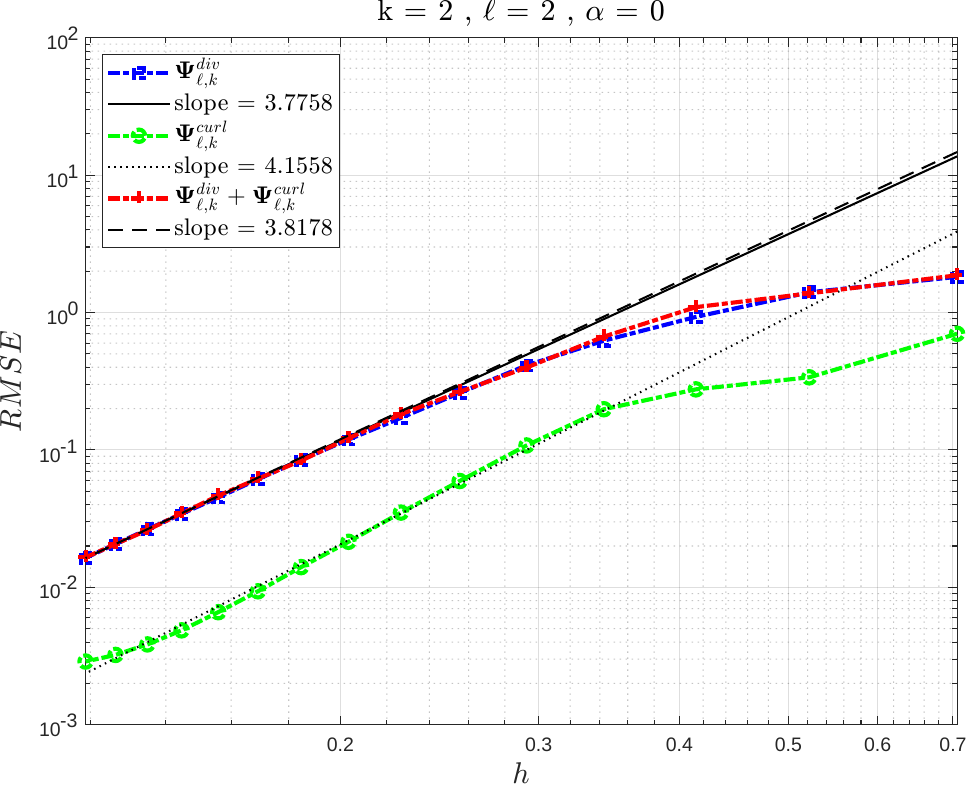}}
{\includegraphics[scale=.45] {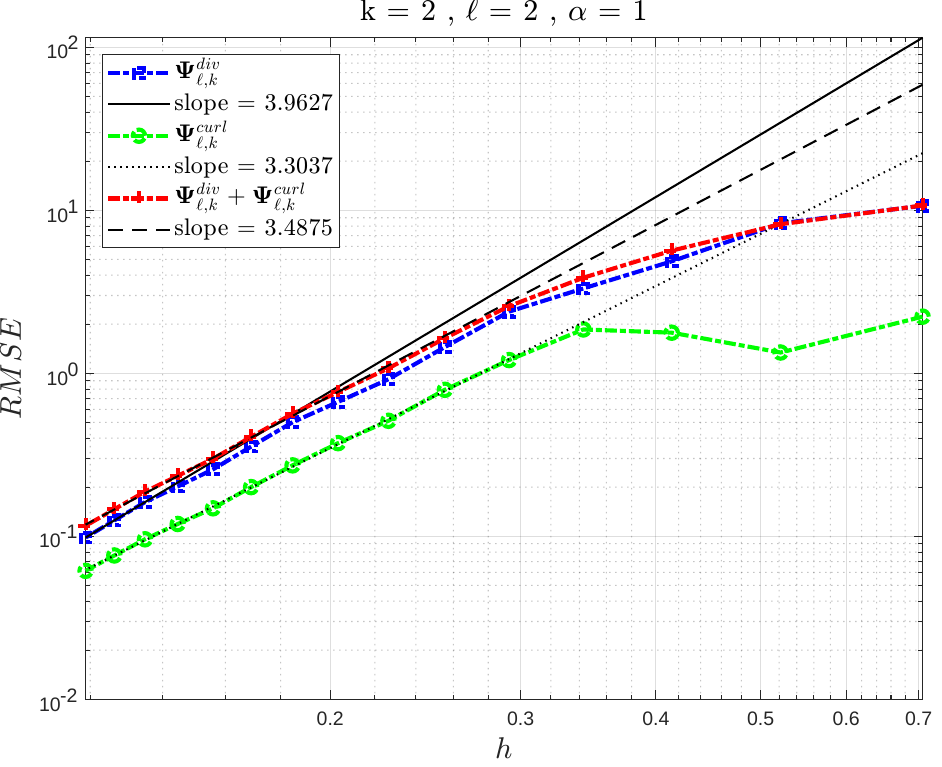}}
  \caption{Errors and convergence rates on $\R^d$ with $\alpha = 0$ (left) and $\alpha = 1$ (right).}
  \label{fig:1}
\end{figure}

The results are shown in Figure \ref{fig:1}. Each of the slopes is computed using the last 10 observed errors, indicating the expected convergence rates ($\mathcal{O}(h^4)$ for $\alpha = 0$ and $\mathcal{O}(h^3)$ for $\alpha = 1$) are obtained asymptotically.

\subsection{Approximation on bounded domains}

As in the previous numerical experiments on $\R^d$, we require a vector field whose curl-free and divergence-free components are known. In this case, we choose ${\bf f} = \bold d(\bold x) + \bold c(\bold x)$
 with
 \[
\bold d(\bold x) = 
\begin{bmatrix}
    \sin(8\pi x_2) \sin^2(4\pi x_1),
   -\sin(8\pi x_1) \sin^2(4\pi x_2)\end{bmatrix}^T,
 \] 
and 
  \[
\bold c(\bold x ) = 
 \begin{bmatrix}
    2\pi \sin(2\pi x) \sin(2\pi y),
   -2\pi \cos(2\pi x) \cos(2\pi y)\end{bmatrix}^T.
 \] 
 Note that for this choice of $\bold f$, the boundary conditions of Remark \ref{rem:boundaryconditions} are satisfied.

 We run the code using the kernels $\boldsymbol{\Psi}^{div}_{2,2}(\bold x) $ and $\boldsymbol{\Psi}^{div}_{3,3}(\bold x) $
for the quasi-interpolant and the $C^{8}$ Mat\'{e}rn kernel as the choice of scalar RBF for the interpolant.
In each ease we use $H= C h^{1/(2k + d -\epsilon)}$ setting $\epsilon = 10^{-3}$ and $C = \mathcal{O}(10^{-2})$.
The function ${\bf f}$ is then sampled on the domain $\Omega = [0,1]\times[0,1]$ using $h = 1/(5+10i)$, $i =1, \dots, 9 $ and we evaluate the error on a fine uniform mesh of $100\times100$ points. The domain 
$V \subset \Omega$ of Section \ref{sec:divfreequasioncompactdomain} is selected to be $V = [0.1,0.9]\times[0.1,0.9]$. We measure the root mean squared error of the quasi-interpolant Leray projection 
 \[error_{Leray} = \mathcal{P}_LIQ_{h,H} \bold f - {\bf d }.\] 

 The results provided  in Figure \ref{fig:2} show the expected convergence rates. That is, computing the slopes using the last five available errors we find the approximation is $\mathcal{O}(h^{2/3})$ when $k = \ell = 2$ and $\mathcal{O}(h^{3/4})$ when $k = \ell = 3$, as predicted. Once more we see that the convergence rates are obtained asymptotically. In Figure \ref{fig:3} we show some typical results of using the quasi-interpolant for Helmholtz--Hodge decomposition.  From left to right the columns show the observed data, the computed Leray projection, the computed curl-free component, and the sum of the two computed components, respectively. We can find that our quasi-interpolant provides excellent approximations to   the full field as well as its corresponding divergence-free share and curl-free share.

\begin{figure}
  \centering
{\includegraphics[scale=0.5] {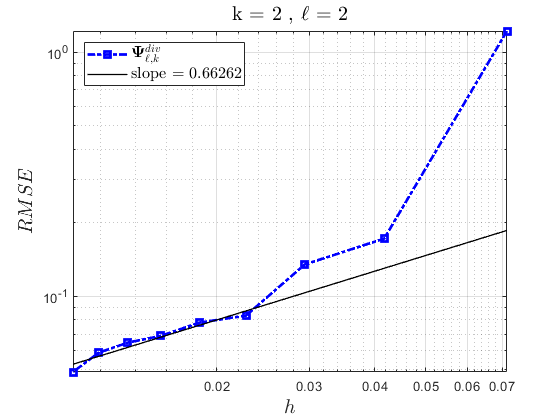 }}{\includegraphics[scale=0.5] {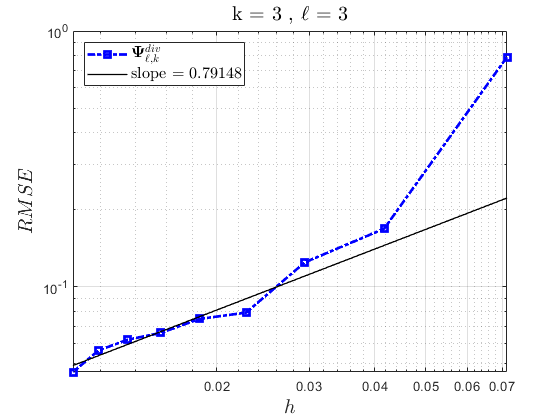 }}

  \caption{Errors and convergence rates on $[0,1]^2$ with $\alpha = 0$ using $\boldsymbol{\Psi}^{div}_{2,2}(\bold x) $ (left) and $\boldsymbol{\Psi}^{div}_{3,3}(\bold x)$ (right).}
  \label{fig:2}
\end{figure}

\begin{figure}
  \centering
{\includegraphics[scale=0.8] {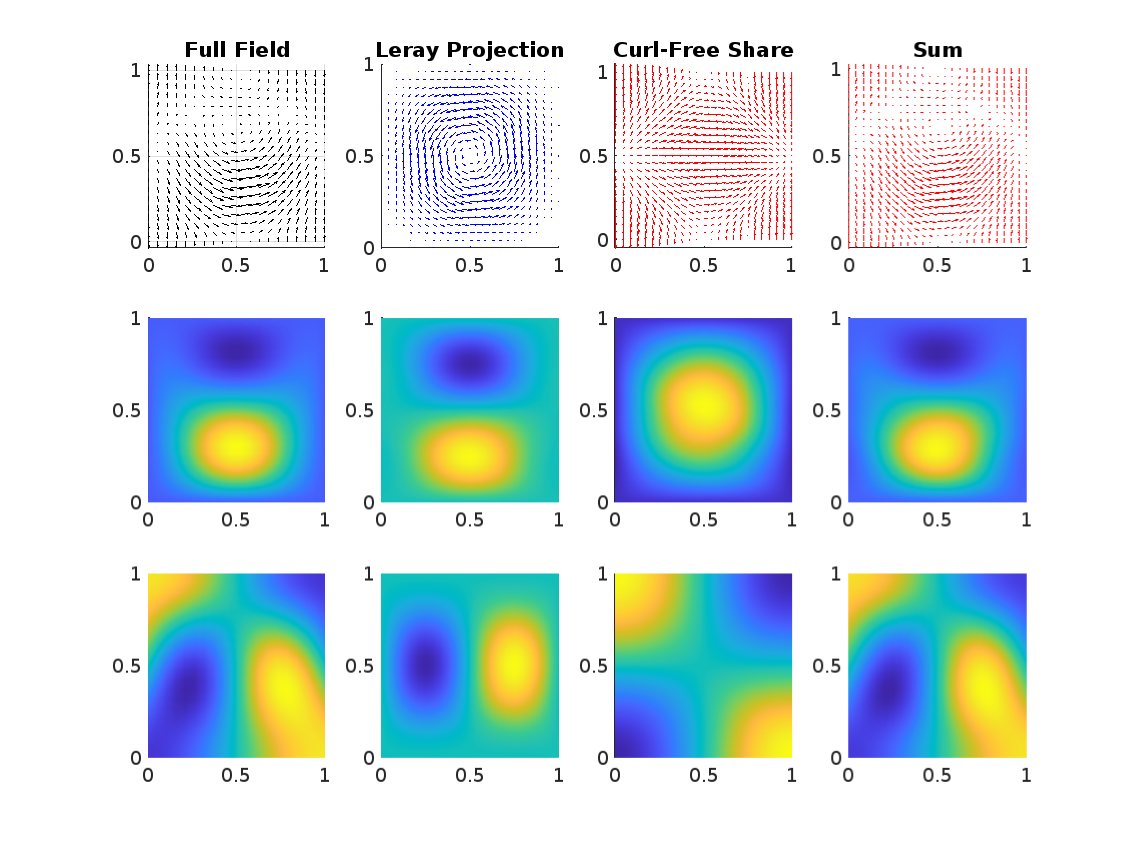 }}

  \caption{Quasi-interpolation for the Helmholtz--Hodge decomposition on a bounded domain.}
  \label{fig:3}
\end{figure}

 \section{\textit{Conclusions and Discussions}}
The paper provides a  quasi-interpolation technique for approximating a smooth vector field and its individual Helmholtz-Hodge components. The technique naturally decomposes the reconstructed field into  divergence-free and curl-free parts, which approximates the  corresponding counterparts of the approximated vector field. Simultaneous error estimates are derived both in the whole space and over a bounded domain. Numerical simulations provided at the end of the paper show that our quasi-interpolation technique is both computationally efficient and numerically stable for  vector field reconstruction as well as  Helmholtz-Hodge decomposition.
Future works will focus on extending the technique to vector field approximation on manifold.

%\section*{Acknowledgments}
%This work is supported in %part by funds from the %National Science Foundation (NSF: \# 1636933 and \# 1920920),  the National Science Foundation of China (NSFC: 12271002), and Yong Outstanding Talents of Universities of Anhui Province (No.2024AH020019).

\section*{Funding}
This work is supported in part by funds from the National Science Foundation (NSF: \# 1636933 and \# 1920920),  the National Science Foundation of China (NSFC: 12271002), and Yong Outstanding Talents of Universities of Anhui Province (No.2024AH020019), and by the NSF RTG grant DMS-2136228.

%\bibliographystyle{plain}
%\bibliography{reference}

%USE THE BELOW OPTIONS IN CASE YOU NEED AUTHOR YEAR FORMAT.
%\bibliographystyle{abbrvnat}
%\bibliography{reference}

\begin{thebibliography}{80}\label{ref:ref}



% \bibitem{label}
% Text of bibliographic item

% notes:
% \bibitem{label} \note

% subbibitems:
% \begin{subbibitems}{label}
% \bibitem{label1}
% \bibitem{label2}
% If there is a note, it should come last:
% \bibitem{label3} \note
% \end{subbibitems}

 \bibitem{Amodei} L. Amodei, M. N. Benbourhim, A vector spline quasi-interpolation. In {\sl Wavelets, Images, and Surface Fitting (Chamonix-MontBlanc, 1993)}, A K Peters, Wellesley, MA, 1994, 1--10.
  \bibitem{Atteia} M. Atteia, M. N. Benbourhim, P. G. Casanova, Quasi-interpolant elastic manifolds. In {\sl Reporte de Investigation IIMAS}. UNAM 3 (25) Mexico (1993).
\bibitem{BeatsonPowell} R. Beatson,  M. Powell, Univariate multiquadric approximation: Quasi-interpolation to scattered data, Constr. Approx. 8 (1992) 275--288.

 \bibitem{Benbourhim} M. N. Benbourhim, P. G. Casanova, Generalized variational quasi-interpolants in $(H^m(\Omega))^n$,  Bol. Soc. Mat. Mexicana 3 (1997).
\bibitem{Benbourhim1} M. N. Benbourhim, A. Bouhamidi, Meshless pseudo-polyharmonic divergence-free and curl-free vector fields approximation, SIAM J. Numer. Anal. 42 (2010) 1218--1245.
\bibitem{Buhmann} M. Buhmann, Convergence of univariate quasi-interpolation using multiquadrics, IMA J. Numer. Anal. 8 (1988) 365--383.
\bibitem{Buhmann1} M. Buhmann, On quasi-interpolation with radial basis functions, J. Approx. Theory  72 (1993) 103--130.
\bibitem{BuhmannandDyn}  M. Buhmann, N. Dyn, D. Levin,   On quasi-interpolation by radial basis functions with scattered centres, Constr. Approx. 11 (1995) 239--254.
\bibitem{ChenandSuter} F. Chen, D. Suter, Div-curl vector quasi-interpolation on a finite domain, Math. Comput. Model.  30 (1999) 179--204.
\bibitem{Cheney}  E. Cheney, W.   Light, Y. Xu,   On kernels and approximation orders, In {\sl Approximation Theory (Memphis, TN)}, Lecture Notes in Pure and Applied Mathematics, Dekker, New York, 138 (1992) 227--242.
\bibitem{CharlesChuiandDiamond}  C. K. Chui, H. Diamond,   A natural formulation of quasi-interpolation by multivariate splines,  Proc. Amer. Math. Soc. 99 (1987) 643--646.
\bibitem{CharlesChuiandDiamond1}  C. K. Chui, H. Diamond,   A characterization of multivariate quasi-interpolation formulas and its applications, Numer. Math. 57 (1990) 105--121.
\bibitem{CharlesChuiandLai} C. K. Chui, M. J. Lai, A multivariate analog of Marsden's identity and a quasi-interpolation scheme, Constr. Approx. 3 (1987) 111--122.
\bibitem{chorin}
A.J.~Chorin, Numerical solution of the Navier-Stokes equations,
Math. Comput., 22 (1968), 745--762.
\bibitem{cucker-1} Cucker, F., Smale, S., On the mathematical foundations of learning, Bull. Amer. Math. Soc. 39 (2001) 1-49.
\bibitem{cucker-2} Cucker, F., Zhou, D. X.,  Learning Theory: An Approximation Theory Viewpoint (Cambridge Monographs on Applied and Computational Mathematics), Cambridge University Press, 2007.
\bibitem{Deriaz} E. Deriaz and V. Perrier, Orthogonal Helmholtz decomposition in arbitrary dimension using divergence-free and curl-free wavelets. Appl. Comput. Harmon. Anal.26 (2009) 249--269.
\bibitem{DoduandRabut} F. Dodu, C. Rabut, Irrotational or divergence-free interpolation, Numer. Math. 98 (2004) 477--498.
\bibitem{Drake} K. Drake, G. Wright, A stable algorithm for divergence-free radial basis functions in the flat limit, arXiv.2001.04557v2.
\bibitem{DynandJackRon} N. Dyn, I. R. H. Jackson,  D. Levin, A. Ron, On multivariate approximation by integer translates of a basis function, Israel J. Math. 78 (1992)  95--130.
\bibitem{Freeden} W. Freeden and T. Gervens, Vector spherical spline interpolation-basic theory and computational aspects, Math. Methods Appl. Sci. 16 (1993) 151--183.
\bibitem{Fuselier} E. Fuselier, Sobolev-type approximation rates for divergence-free and curl-free RBF interpolants, Math. Comput. 77 (2008) 1407--1423.
 \bibitem{Fuselier1} E. Fuselier,  F. Narcowich, J. Ward, G. Wright,  Error and stability estimates for surface-divergence free RBF interpolants on the sphere, Math. Comput. 78 (2009) 2157--2186.
 \bibitem{fsw}E. Fuselier, V.~Shankar, and G.~Wright,
A high-order radial basis function (RBF) Leray projection method for the solution of the incompressible unsteady Stokes equations, Computers and Fluids, 128 (2016) 41-52.
 \bibitem{fw2}E. Fuselier and G.~Wright,
Stability and error estimates for vector field interpolation and decomposition on the sphere with RBF's, SIAM J. Numer. Anal. 47 (2009), 3213–3239
 \bibitem{fw}E. Fuselier and G.~Wright,
A radial basis function method for computing Helmholtz--Hodge Decompositions, IMA J. Numer. Anal. 37 (2017) , 774–797
\bibitem{GaoandWu4} W. W. Gao, Z. M.  Wu,  Constructing radial kernels with higher-order moment conditions, Adv. Comput. Math.  43 (2017) 1355--1375.
\bibitem{Gaoetal} W. W. Gao, G. E. Fasshauer, X. P. Sun, X. Zhou, Optimality and regularization properties of quasi-interpolation: deterministic  and stochastic approaches, SIAM J. Numer. Anal. 58 (2020) 2059--2078.
\bibitem{Gaoandsun} W. W. Gao, X. P. Sun, Z. M. Wu, and X. Zhou, Multivariate Monte Carlo approximation based on scattered data, SIAM J. Sci. Comput. 42 (2020) A2262-A2280.
\bibitem{Gaoetal2} W. W. Gao, G. E. Fasshauer, and N. Fisher, Divergence-free quasi-interpolation, Appl. Comp. Harmon, Anal. 60 (2022) 471-488.
\bibitem{Grohs} P. Grohs, Quasi-interpolation in Riemannian manifolds, IMA J. Numer. Anal. 33 (2013) 849--874.
 \bibitem{Grohs1} P. Grohs, M. Sprecher, T. Yu, Scattered manifold-valued data approximation, Numer. Math. 135 (2017) 987--1010.
\bibitem{gms}
J.L.~Guermond, P.~Minev , and J.~Shen,
{\em An overview of projection methods for incompressible flow},
Comput. Methods App. Mech. and Engrg., 195 (2009), 6011--6045.
 \bibitem{GuoandZhou} X. Guo, D. X. Zhou, An empirical feature-based learning algorithm producing sparse approximations, Appl. Comput. Harmon. Anal. 32 (2012) 389-400.
\bibitem{JiaandLei} R. Q. Jia, J. J. Lei, A new version of Strang--Fix conditions, J. Approx. Theory 74 (1993) 221--225.
\bibitem{LeijiaandCheney} J. J. Lei, R. Q. Jia, E. Cheney,  Approximation from shift-invariant spaces by integral operators, SIAM J. Math. Anal. 28 (1997) 481--498.
\bibitem{Wendland1} C. Keim, H. Wendland, A high-order, analytically divergence-free approximation method for the time-dependent Stokes problem, SIAM J. Numer. Anal. 54 (2016) 1288--1312.
\bibitem{Lanzaraandmazya} F. Lanzara, V. Maz'ya,  G. Schmidt,  Approximate approximations from scattered data, J. Approx. Theory 145 (2007) 141--170.
\bibitem{Lanzaraandmazya1} F. Lanzara, V. Maz'ya,  G. Schmidt,   Approximation of solutions to multidimensional parabolic equations by approximate approximations,  Appl. Comput. Harmon. Anal.  36 (2014) 167--182.
 \bibitem{Lanzaraandmazya2} F. Lanzara, V. Maz'ya,  G. Schmidt,   Fast cubature of volume potentials over rectangular domains by approximate approximations,  Appl. Comput. Harmon. Anal.  41 (2016) 749--767.
\bibitem{Narcowich} F. Narcowich, J. Ward, Generalized Hermite interpolation via matrix-valued conditionally positive definite functions, Math. Comput. 63 (1994) 661--687.
 \bibitem{Narcowich1} F. Narcowich, J. Ward, G. Wright, Divergence-free RBFs on surfaces,  J. Fourier Anal. Appl. 13 (2007) 643--663.
 \bibitem{pp} K. Polthier and E. Pruess, Identifying vector field singularities using a discrete Hodge decomposition, Visualization and Mathematics III (H.C. Hedge and K. Polthier eds.) Springer, Berlin.
 \bibitem{CR} C. Rabut, An introduction to Schoenberg's approximation, Comput. Math. Appl. 24 (1991) 139--175.
\bibitem{Rabut0}  C. Rabut, How to build quasi-interpolants: applications to polyharmonic B-splines. In {\sl Curves and Surfaces}, P. J. Laurent, A. Le M\'ehaut\'e, and L. L. Schumaker (eds.). Academic Press, New York, (1991) 391--402.
\bibitem{Rabut}  C. Rabut, Elementary $m$-harmonic cardinal B-splines, Numer. Algor. 2 (1992) 39--62.
  \bibitem{Rabut1}  C. Rabut,  High level $m$-harmonic cardinal B-splines, Numer. Algor. 2 (1992) 63--84.
%\bibitem{Wendland2} T. Ramming, H. Wendland, A kernel-based discretisation method for first order partial differential equations, Math. Comput. 87 (2018) 1757-1781.

\bibitem{Schabackandwu} R. Schaback, Z. M. Wu, Construction techniques for highly accurate quasi-interpolation operators, J. Approx. Theory 91 (1997) 320--331.
\bibitem{Schoenberg} I. J. Schoenberg, Contributions to the problem of approximation of equidistant data by analytic functions, Quart. Appl. Math. 4 (1946) 45-99 and 112--141.
\bibitem{Wendland3} D. Schr\"ader, H. Wendland, A high-order, analytically divergence-free discretization method for Darcy's problem, Math. Comput. 80 (2011) 263--277.
\bibitem{Schwarz} G. Schwarz, {\em Hodge Decomposition-A Method for Solving Boundary Value Problems}, Lecture Notes in Mathematics, 1607, Springer, Berlin, vii-155.
\bibitem{Smaleandzhou} S. Smale, D. X. Zhou, Shannon sampling II: Connections to learning theory, Appl. Comput. Harmon. Anal. 19 (2005) 285-302.
\bibitem{Stein} E. Stein, G. Weiss, Introduction to Fourier Analysis on Euclidean Spaces, Princeton Univ. Press, Princeton, 1971.

\bibitem{Strang} G. Strang, G. Fix, A Fourier analysis of the finite-element method, In {\sl Constructive Aspects of Functional Analysis}, G. Geymonat (ed.) (C.I.M.E., Rome, 1973) 793--840.
\bibitem{SunandWu} H. Sun, Q. Wu, A note on application of integral operator in learning theory, Appl. Comput. Harmon. Anal.  26 (2009) 416-421.
\bibitem{temam}R.~Temam,{ Sur l'approximation de la solution des \'equations de Navier-Stokes pare la m\'ethod de pas fractionnaries, II}. Arch. Ration. Mech. Anal., 33 (1969), 337--385.
\bibitem{VenelandBeatson} R. Vennell, R. Beatson, A divergence-free spatial interpolator for large sparse velocity data sets, J. Geophys. Res. 114 (2009) 10--24.
  \bibitem{Wendland5} H. Wendland,  Divergence-free kernel methods for approximating the Stokes problem, SIAM J. Numer. Anal. 47 (2009) 3158--3179.
\bibitem{ZMRS0} Z. M. Wu, R. Schaback, Local error estimates for radial basis function interpolation of scattered data, IMA J. Numer.  Anal. 13 (1993) 13-27. 
\bibitem{ZMRS} Z. M. Wu, R. Schaback, Shape preserving properties and convergence of univariate multiquadric quasi-interpolation, Acta. Math. Appl. Sin. 10 (1994) 441--446.
\bibitem{GeneralizedWu} Z.M. Wu, J. P. Liu, Generalized Strang--Fix condition for scattered data quasi-interpolation, Adv. Comput. Math. 23(2005)  201--214.


%


%
%\bibitem{bcm}
%J.L.~Brown, P.~Cortez , and J.~Minion,
%{\em Accurate projection methods for the incompressible Navier-Stokes equations},
%J. Comput. Phys., 168 (2001), 464--499.
%
%\bibitem{el}
%W.~E and J.G.~Liu,
%{\em Projection method I: Convergence and numerical boundary layers},
%SIAM J. Numer. Anal., 32 (1995), 1017--1057.
%
%\bibitem{jl}
%H.~Johnston, and J.-G.~Liu,
%{ \em Accurate, stable and efficient Navier-Stokes solvers based on explicit treatment of the pressure term},
%J. Comput. Phys., 199 (2004), 221--259.
%
%\bibitem{sr}
%D.~Shirokoff, and R.R.~Rosales,
%{ \em An efficient method for the incompressible Navier-Stokes equations on irregular domains with no-slip
%	boundary conditions, high order up to the boundary},
%J. Comput. Phys., 230 (2011), 8619--8646.
%
%\bibitem{hp}
%H.D.~Henshaw and N.A.~Petersson
%{\em A split-step scheme for the incompressible {N}avier--{S}tokes equations},
%In Numerical Simulation of Incompressible Flows, Editor M.M.~Hafez,
%World Scientific, (2003), 108--125.
%
%\bibitem{esw}
%H.~Elman, D.~Silvester, and A.~Wathen,
%{Finite Elements and Fast Iterative Solvers with Applications in Incompressible Fluid Dynamics},
%Oxford, (2005).
%
%\bibitem{fisher}
%N.L.~Fisher,
%{Orthogonal Spline Collocation Methods for Fluid Flow Problems},
%PhD diss., Colorado School of Mines, (20019).
%
%\bibitem{acheson}
%D.J.~Acheson,
%{Elementary Fluid Dynamics},
%Oxford, (1990).
%
%\bibitem{cm}
%A.J.~Chorin and J.E.~Marsden,
%{A mathematical Introduction to Fluid Mechanics},
%Springer-Verlag, (1990).
%
%\bibitem{chqz}
%C.~Canuto, M.Y.~Hussaini, A.~Quateroni, and T.A.~Zang,
%{Spectral Methods: Evolution to complex Geometries and Applications to Fluid Dynamics},
%Springer, (2007).
%
%\bibitem{botella}
%O.~Botella,
%{On a collocation B-spline method for the solution of the Navier-Stokes equations},
%Computers \& Fluids, 31 (2002), 387--420.


\end{thebibliography}

\end{document}